\newtheorem{theorem}{Theorem}[section]
\newtheorem{lemma}{Lemma}[section]
\newtheorem{conjecture}{Conjecture}[section]
\newtheorem{claim}{Claim}
\renewcommand\proofname{\it{Proof}}
\title{\bf Proof of Lew's conjecture on the spectral gaps of simplicial complexes}
\author{Xiongfeng Zhan, Xueyi Huang,  Huiqiu Lin\setcounter{footnote}{-1}\footnote{\emph{Email address:} zhanxfmath@163.com (X. Zhan), huangxymath@163.com (X. Huang), huiqiulin@126.com (H. Lin).}\\[2mm]
	\small School of Mathematics, East China University of Science and Technology, \\
	\small  Shanghai 200237, P. R. China
}
\date{}
\begin{document}
	\maketitle
	\begin{abstract}
	As a generalization of graph Laplacians to higher dimensions, the combinatorial Laplacians of simplicial complexes have garnered increasing attention. 	Let $X$ be a simplicial complex on vertex set $V$ of size $n$, and let $X(k)$ denote the set of all $k$-dimensional simplices of $X$. The $k$-th spectral gap $\mu_k(X)$ is the smallest eigenvalue of the reduced $k$-dimensional Laplacian of $X$.  For any $k\geq -1$,  Lew [J. Combin. Theory Ser. A 169 (2020) 105127] established  a lower bound for $\mu_k(X)$:
		 $$\mu_k(X)\geq (d+1)\left(\min_{\sigma\in X(k)}\deg_X(\sigma)+k+1\right)-dn\geq (d+1)(k+1)-dn,$$
	where  $\deg_X(\sigma)$ and $d$ denote the degree of $\sigma$ in $X$ and the  maximal dimension of a missing face of $X$, respectively. In this paper, we identify the unique simplicial complex
	that achieves the lower bound of the $k$-th spectral gap, $(d+1)(k+1)-dn$, for some 
$k$, thereby confirming a conjecture proposed by Lew.
		
		\par\vspace{2mm}
		
		\noindent{\bfseries Keywords:} Simplicial complex, spectral gap, reduced $k$-dimensional Laplacian,  missing face, geometric realization
		\par\vspace{1mm}
		
		\noindent{\bfseries 2020 MSC:}  05E45
	\end{abstract}

	\section{Introduction}\label{section::1}

	An \textit{abstract simplicial complex} $X$ is a collection of  finite sets that is closed under set inclusion, i.e., if $\sigma\in X$ and $\tau\subseteq \sigma$, then $\tau\in X$. A finite set $\sigma$ in $X$ is called a \textit{simplex} of  $X$, and the \textit{dimension} of $\sigma$ is defined as $|\sigma|-1$. Note that the empty set $\varnothing$ is always in $X$, and its dimension is  $-1$. The \textit{dimension} of $X$, denoted by $\dim(X)$, is the largest dimension of all of the simplices in $X$. The \textit{vertex set} of $X$, denoted by $V(X)$, is the union of all $0$-dimensional simplices of $X$. A \textit{subcomplex} of a simplicial complex $X$ is a subset of $X$ that is itself a simplicial complex. 	A \textit{missing face} in $X$ is a subset $\sigma$ of $V$ such that $\sigma\notin X$ but $\tau\in X$ for every proper subset $\tau$ of $\sigma$. The maximal dimension of a missing face of $X$ is denoted by $h(X)$. 
	
	For $k\geq -1$, 
let $C^k(X)$ denote the space of real valued $k$-cochains of $X$, and let $d_k:C^k(X)\rightarrow C^{k+1}(X)$ denote the coboundary operator. Then the 
	\textit{reduced $k$-dimensional Laplacian} of $X$ is defined by \[L_k(X)=\begin{cases}
		d_{-1}^*d_{-1}&\mbox{if $k=-1$},\\
		d_{k-1}d_{k-1}^*+d_{k}^*d_{k}&\mbox{if $k\geq 0$},
	\end{cases}\] 
	where $d_k^*:C^{k+1}(X)\rightarrow C^{k}(X)$ is the adjoint operator of $d_k$. Clearly, $L_k(X)$ is a positive semi-definite operator on $C^k(X)$. 
	 The $k$-th \textit{spectral gap} of $X$, denoted by $\mu_k(X)$, is the smallest eigenvalue of $L_k(X)$.

The theory of graph Laplacians dates back to Kirchhoff \cite{Kir47} in his study of electrical networks, and his celebrated matrix-tree theorem (see, for example, \cite{Kal83}). Over the past half century, there has been a wealth of research on the spectra of graph Laplacians \cite{AM85, Chu96, Fie73, GMS90, GM94, Mer94}. As a generalization of graph Laplacians to higher dimensions, the combinatorial Laplacians of simplicial complexes have attracted increasing attention in recent years. For instance, the spectra of Laplacians have been shown to be integral for several well-known families of simplicial complexes, such as the chessboard complexes \cite{FH98}, the matching complexes \cite{DW02}, the matroid complexes \cite{KRS02}, and the shifted simplicial complexes \cite{DR02}. Among other things, Duval, Klivans, and Martin \cite{DKM09} established a simplicial version of the matrix-tree theorem that counts simplicial spanning trees, weighted by the
squares of the orders of their top-dimensional integral homology groups, in
terms of the Laplacians of simplicial complexes. For further results on the  Laplacians of simplicial complexes, we refer the reader to \cite{ABM05,Den01,Eck44,GW16,HJ13,Lew20,Lew20+,Lew23,Lim20,SBHLJ20,SY20}.
	
Let $X(k)$ denote the set of all $k$-dimensional simplices of $X$. If $\sigma\in X(k)$, then the  \textit{degree} of $\sigma$ in $X$ is defined by
	$\deg_X(\sigma)=|\{\eta\in X(k+1): \sigma\subseteq \eta\}|$. In 2020, Lew \cite{Lew20} established a lower bound for the $k$-th spectral gap $\mu_k(X)$ of $X$ in terms of the minimum degree of $\sigma\in X(k)$  and the maximum dimension of a missing face of $X$.

	\begin{theorem}[{\cite[Theorem 1.1]{Lew20}}]\label{thm::Lew_bound}
		Let $X$ be a simplicial complex on vertex set $V$ of size $n$, with $h(X)=d$. Then for $k\geq -1$, 
		$$\mu_k(X)\geq (d+1)\left(\min_{\sigma\in X(k)}\deg_X(\sigma)+k+1\right)-dn.$$
		In particular, 
		\begin{equation*}
			\mu_k(X)\geq (d+1)(k+1)-dn.
		\end{equation*}
	\end{theorem}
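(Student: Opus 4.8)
The plan is to compare $X$ with the full simplex $\Delta$ on $V$, for which the reduced Laplacian is especially simple. First I would record the elementary fact that $L_k(\Delta)=nI$ on $C^k(\Delta)$ for every $k$ (a Hodge-theoretic computation, immediate since the full simplex is acyclic and directly checkable in small cases). Viewing $C^k(X)$ as the subspace of $C^k(\Delta)$ consisting of cochains supported on $X(k)$, I extend any $\phi\in C^k(X)$ by zero to $\tilde\phi\in C^k(\Delta)$ and evaluate the quadratic form $\langle L_k(\Delta)\tilde\phi,\tilde\phi\rangle=n\|\phi\|^2$ in two ways.

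The next step is to show that passing from $\Delta$ to $X$ loses only the contribution of the non-faces of dimension $k+1$. Since $X$ is downward closed, for a $(k-1)$-face $\tau$ the value $(d_{k-1}^*\tilde\phi)(\tau)$ vanishes unless $\tau\in X$, and otherwise agrees with the computation inside $X$; hence the down-part $\|d_{k-1}^*\tilde\phi\|^2$ is unchanged. For the up-part, $(d_k\tilde\phi)(\eta)$ with $\eta\in X(k+1)$ involves only facets of $\eta$, all lying in $X$, so it reproduces $\|d_k\phi\|^2$ as computed in $X$. Splitting off the remaining terms, I obtain the exact identity $\langle L_k(X)\phi,\phi\rangle=n\|\phi\|^2-E$, where $E=\sum_{\eta}|(d_k\tilde\phi)(\eta)|^2$ ranges over the $(k+1)$-dimensional non-faces $\eta$ of $X$. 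Bounding $\mu_k(X)$ from below is thereby reduced to bounding $E$ from above by $(d+1)\big(n-k-1-\min_\sigma\deg_X(\sigma)\big)\|\phi\|^2$.

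The heart of the argument---and the only place where $h(X)=d$ enters---is the following observation about a non-face $\eta$ of dimension $k+1$. Only those facets $\eta\setminus v$ lying in $X$ contribute to $(d_k\tilde\phi)(\eta)$; let $A_\eta$ be the set of such $v$. Because $\eta\notin X$, it contains a missing face $\mu$, and I claim $A_\eta\subseteq\mu$: if some $v\in A_\eta$ were outside $\mu$, then $\mu\subseteq\eta\setminus v\in X$ would force $\mu\in X$, contradicting that $\mu$ is a non-face. Consequently $|A_\eta|\le|\mu|\le d+1$, so $(d_k\tilde\phi)(\eta)$ is a sum of at most $d+1$ terms, and Cauchy--Schwarz gives $|(d_k\tilde\phi)(\eta)|^2\le(d+1)\sum_{v\in A_\eta}\phi(\eta\setminus v)^2$.

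Finally I would reverse the order of summation: each $\sigma\in X(k)$ arises as a facet $\eta\setminus v$ of a $(k+1)$-dimensional non-face exactly $(n-k-1)-\deg_X(\sigma)$ times, once for each vertex $v\notin\sigma$ with $\sigma\cup v\notin X$. This yields $E\le(d+1)\sum_\sigma\big((n-k-1)-\deg_X(\sigma)\big)\phi(\sigma)^2\le(d+1)\big(n-k-1-\min_\sigma\deg_X(\sigma)\big)\|\phi\|^2$, and substituting into the identity produces exactly $(d+1)(\min_\sigma\deg_X(\sigma)+k+1)-dn$ as a lower bound for every Rayleigh quotient, hence for $\mu_k(X)$; the ``in particular'' statement follows from $\min_\sigma\deg_X(\sigma)\ge0$. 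I expect the main obstacle to be establishing the clean identity $\langle L_k(X)\phi,\phi\rangle=n\|\phi\|^2-E$ correctly---in particular verifying that the down-Laplacian contributes nothing new---together with isolating the combinatorial lemma $|A_\eta|\le d+1$, which is precisely what converts the qualitative hypothesis on missing faces into the quantitative factor $d+1$.
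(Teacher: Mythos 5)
Your proof is correct, but it follows a genuinely different route from the one the paper (following Lew) relies on. The paper's argument proceeds through the decomposition $L_k=D_k+H_kH_k^T$ of Lemma \ref{lem::L_k}: one discards the positive semi-definite part $H_kH_k^T$ and bounds the diagonal entries of $D_k$ from below via the degree-sum inequality of Lemma \ref{lem::sum_deg_1}, which in turn rests on the double-counting identity of Lemma \ref{lem::sum_deg_2} and the bound $|N_\sigma(v)|\le d$ (re-proved here as Lemma \ref{lem::count_d}); this is exactly the chain displayed in \eqref{equ::diag1}. You instead embed $C^k(X)$ into $C^k(\Delta_{n-1})$, use $L_k(\Delta_{n-1})=nI$, and bound the energy $E$ lost on $(k+1)$-dimensional non-faces by Cauchy--Schwarz together with your lemma $|A_\eta|\le d+1$. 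Your identity $\langle L_k(X)\phi,\phi\rangle=n\|\phi\|^2-E$ checks out (the down-part is genuinely unchanged by zero-extension, since downward closedness kills $(d_{k-1}^*\tilde\phi)(\tau)$ for $\tau\notin X$), the count $n-k-1-\deg_X(\sigma)$ in the reversed summation is right, and the constants assemble to $(d+1)(\min_\sigma\deg_X(\sigma)+k+1)-dn$. The two arguments share the same combinatorial core: writing a non-face as $\eta=\sigma\cup\{v\}$, your $|A_\eta|\le d+1$ is equivalent to $|N_\sigma(v)|\le d$, and your one-line argument via $A_\eta\subseteq\mu$ for a missing face $\mu\subseteq\eta$ is slicker than the iterative filling argument in Lemma \ref{lem::count_d}. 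What each approach buys: the paper's matrix decomposition is not just a tool for the bound but the engine of the rigidity analysis in the equality case (e.g., extracting $H_k^T\boldsymbol{x}=0$ and $\deg_X(\sigma)=0$ in Claim \ref{claim::2}), whereas your Rayleigh-quotient comparison with the full simplex is more elementary and self-contained, and makes the ``deficit from $nI$'' transparent; equality in your argument would force rigidity of $E$ instead, though it does not directly hand you the matrix-level facts the paper exploits. One small repair: justifying $L_k(\Delta_{n-1})=nI$ by acyclicity alone is insufficient (acyclicity only rules out kernel), but the fact is immediate from Lemma \ref{lem::Matrix_Rep}, since in the full simplex every $\sigma\in\Delta_{n-1}(k)$ has degree $n-k-1$ and no pair satisfies $\sigma\sim\tau$, so the matrix is $((n-k-1)+(k+1))I=nI$.
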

	Let $X$ and $Y$ be two simplicial complexes on disjoint vertex sets. The \textit{join} of $X$ and $Y$ is the simplicial  complex 
	\begin{equation*}
		X*Y=\{\sigma\cup\tau:\sigma\in X, \tau\in Y\}.
	\end{equation*}	
	We  denote by $X*X$ the join of $X$ with a disjoint copy of itself. Also, we denote the  complex $X*X*\cdots*X$ ($k$ times) by $X^{*k}$.	Furthermore, 
	we will denote $X_1*X_2*\cdots*X_s$ by $\mathrm{Join}\{X_i:1\leq i\leq s\}$.

	Let $\Delta_m$ be the complete simplicial complex on $m+1$ vertices. The \textit{$k$-dimensional skeleton} of $\Delta_m$, denoted by $\Delta_m^{(k)}$, is the subcomplex of $\Delta_m$ whose simplices are all the sets $\sigma\in\Delta_m$ with $|\sigma|\leq k+1$. Clearly, $\Delta_m^{(m)}=\Delta_m$. By using the join of two types  of skeletons, Lew \cite{Lew20} constructed an example of  simplicial complex that achieves the lower bound in Theorem \ref{thm::Lew_bound}. 
	\begin{theorem}[{\cite[Proposition 1.5]{Lew20}}]\label{thm::extremal_case}
		Let $Z =\left(\Delta_d^{(d-1)}\right)^{*t}*\Delta_{r-1}$, and let $n = (d + 1)t + r$ be the number of
		vertices of $Z$. Then all missing faces of $Z$ are of dimension $d$,  and 
		\[
		\mu_k(Z)=\begin{cases}
			(d+1)\left(t-\lfloor\frac{k+1}{d}\rfloor\right)+r&\mbox{if $-1\leq k\leq dt-1$},\\
			r&\mbox{if $dt\leq k\leq dt+r-1$},\\
		\end{cases}
		\]
		and 
		\[
		\min_{\sigma\in Z(k)}\deg_Z(\sigma)=\begin{cases}
			n-(k+1)-\lfloor\frac{k+1}{d}\rfloor&\mbox{if $-1\leq k\leq dt-1$},\\
			n-(k+1)-t&\mbox{if $dt\leq k\leq dt+r-1$}.\\
		\end{cases}
		\]
		Therefore, 
		\[\mu_k(Z)=(d+1)\left(\min_{\sigma\in X(k)}\deg_X(\sigma)+k+1\right)-dn\] 
		for all $-1\leq k\leq \dim(Z)=dt+r-1$.
	\end{theorem}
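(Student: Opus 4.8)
The plan is to compute everything from the join structure, since all three assertions—the missing faces, the spectral gaps, and the minimum degrees—should follow from general properties of joins once those are established. I would first record the key structural facts about $Z=\left(\Delta_d^{(d-1)}\right)^{*t}*\Delta_{r-1}$. The skeleton $\Delta_d^{(d-1)}$ is the complete complex on $d+1$ vertices with all top-dimensional ($d$-dimensional) faces deleted; its unique missing face is the full vertex set, of dimension $d$. The complete complex $\Delta_{r-1}$ on $r$ vertices has no missing faces at all. Using the elementary fact that the missing faces of a join $X*Y$ are precisely the missing faces of $X$ together with those of $Y$ (a subset of $V(X)\cup V(Y)$ is a missing face iff it lies entirely in one factor and is a missing face there), I would conclude that every missing face of $Z$ comes from one of the $t$ copies of $\Delta_d^{(d-1)}$ and hence has dimension exactly $d$. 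This gives $h(Z)=d$.

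Next I would attack the spectral gap. The central tool is the behavior of the reduced Laplacian spectrum under joins. For spaces of reduced cochains one has a Künneth-type decomposition $\tilde C^{k}(X*Y)\cong\bigoplus_{i+j=k-1}\tilde C^{i}(X)\otimes\tilde C^{j}(Y)$, and under this identification the Laplacian splits as $L_k(X*Y)=\sum_{i+j=k-1} \big(L_i(X)\otimes I + I\otimes L_j(Y)\big)$ acting on the corresponding summand. Consequently the eigenvalues of $L_k(X*Y)$ are exactly the sums $\lambda+\nu$ where $\lambda$ is an eigenvalue of $L_i(X)$ and $\nu$ an eigenvalue of $L_j(Y)$ with $i+j=k-1$, and in particular
\begin{equation*}
\mu_k(X*Y)=\min_{i+j=k-1}\big(\mu_i(X)+\mu_j(Y)\big).
\end{equation*}
I would then need the spectra of the two building blocks. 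For the complete complex $\Delta_{r-1}$ the reduced cohomology vanishes except in the top degree, so $\mu_i(\Delta_{r-1})=r$ for $-1\le i\le r-2$ (and there is nothing above that); for the skeleton $\Delta_d^{(d-1)}$ one computes $\mu_i\big(\Delta_d^{(d-1)}\big)=d+1$ for $-1\le i\le d-2$, with the top nonzero gap in degree $d-1$ equal to $0$, reflecting the nontrivial top homology created by deleting the $d$-faces. With these base cases in hand, I would iterate the join formula over the $t+1$ factors of $Z$ and minimize the sum $\sum \mu_{i_\ell}$ subject to $\sum i_\ell = k-(t+1)+1$; the optimal allocation distributes the degree among the factors so as to place as many factors as possible in their cheap ($0$-contributing) top slot, which produces the floor function $\lfloor (k+1)/d\rfloor$ and yields the stated piecewise formula for $\mu_k(Z)$.

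For the minimum degree I would use the join formula for degrees directly. In a join, a $k$-simplex $\sigma=\sigma_1\cup\cdots\cup\sigma_{t+1}$ (with $\sigma_\ell$ in the $\ell$-th factor) has $\deg_Z(\sigma)$ equal to the number of ways to extend $\sigma$ by one vertex in some factor without leaving $Z$; extending into the complete factor $\Delta_{r-1}$ is always possible, while extending into a copy of $\Delta_d^{(d-1)}$ is allowed unless $\sigma_\ell$ already occupies $d$ of that factor's $d+1$ vertices (since the top face is missing). Summing the available vertices across factors and minimizing over all $k$-simplices—which forces $\sigma$ to saturate as many $\Delta_d^{(d-1)}$ factors as possible, again giving a $\lfloor(k+1)/d\rfloor$ term—produces the two-case formula for $\min_\sigma\deg_Z(\sigma)$. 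Finally, substituting the computed values of $\mu_k(Z)$ and $\min_\sigma\deg_Z(\sigma)$ into $(d+1)\big(\min_\sigma\deg_Z(\sigma)+k+1\big)-dn$ and using $n=(d+1)t+r$ verifies the equality in both ranges of $k$, confirming that $Z$ meets Lew's bound with equality throughout its dimension range.

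I expect the main obstacle to be the careful justification of the join formula for the Laplacian spectrum, namely establishing the tensor-sum decomposition $L_k(X*Y)=\sum_{i+j=k-1}(L_i(X)\otimes I+I\otimes L_j(Y))$ at the level of cochain complexes and verifying that the reduced coboundary of the join respects the Künneth identification with the correct signs. The subsequent minimization and the degree count are bookkeeping exercises involving the floor function, but they must be organized so that the boundary cases (when $d\mid(k+1)$, and the transition at $k=dt-1$ where the tally switches from the skeleton range to the simplex range) are handled correctly; getting these edge cases to match the stated piecewise formulas is where the routine computation demands the most care.
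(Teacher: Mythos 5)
Your proposal is correct, but note first that this paper contains no proof of Theorem \ref{thm::extremal_case} to compare against: the statement is imported verbatim from Lew \cite{Lew20} (Proposition 1.5) and used as a black box. Measured against Lew's original argument, your route is essentially the same one: the spectral computation rests on the Duval--Reiner-type join identity, namely that under the identification $C^{k}(X*Y)\cong\bigoplus_{i+j=k-1}C^{i}(X)\otimes C^{j}(Y)$ the Laplacian preserves each summand and acts there as $L_i(X)\otimes I+I\otimes L_j(Y)$, whence $\mu_k(X*Y)=\min_{i+j=k-1}\bigl(\mu_i(X)+\mu_j(Y)\bigr)$ (with the minimum taken over pairs for which both cochain spaces are nonzero). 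Your identification of the missing faces of a join, your base spectra, the minimization yielding $\lfloor(k+1)/d\rfloor$, and the degree bookkeeping $\deg_Z(\sigma)=n-(k+1)-\#\{\ell:|\sigma_\ell|=d\}$ are all sound, and the final substitution checks out in both ranges of $k$. Two small corrections: for the complete complex one in fact has $L_i(\Delta_{r-1})=rI$ in \emph{all} degrees $-1\le i\le r-1$, including the top degree you excluded; and the degree constraint for the iterated join of $t+1$ factors is $\sum_\ell i_\ell+j=k-t$ (what you wrote, just worth stating cleanly). The one substantive point, which you rightly flag as the main obstacle, is the block-diagonalization itself: the coboundary of the join mixes the summands, sending $C^i\otimes C^j$ into $C^{i+1}\otimes C^{j}\oplus C^{i}\otimes C^{j+1}$, so $L_k=d_{k-1}d_{k-1}^*+d_k^*d_k$ a priori has components $C^i\otimes C^j\to C^{i+1}\otimes C^{j-1}$ coming from both terms; these cancel only because of the Koszul sign $d(\varphi\otimes\psi)=d\varphi\otimes\psi+(-1)^{i+1}\varphi\otimes d\psi$, and the adjoints must be taken with respect to the product inner product (which is legitimate since the basis correspondence $1_{\sigma\cup\tau}\leftrightarrow\pm 1_\sigma\otimes 1_\tau$ is orthonormal). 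Writing out that cancellation is exactly the content of the result Lew cites from Duval--Reiner \cite{DR02}, so your plan, once that verification is carried out, reproduces the original proof.
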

	
		Let $X$ and $Y$ be two simplicial complexes. A \textit{simplicial mapping} of $X$ into $Y$ is a mapping $f:V(X)\rightarrow V(Y)$ such that $f(\sigma)\in Y$ for all $\sigma\in X$. We say that $X$ and  $Y$ are \textit{isomorphic}, denoted by $X\cong Y$, if there is a bijective mapping $f:V(X)\rightarrow V(Y)$ such that both $f$ and $f^{-1}$ are simplicial mappings.
		
In	\cite{Lew20}, Lew also considered characterizing the simplicial complexes that attain the equality $\mu_k(X)=(d+1)(k+1)-dn$ at some dimension $k$, and proposed the following conjecture.
	\begin{conjecture}[{\cite[Conjecture 5.1]{Lew20}}]\label{conj::main}
		Let $X$ be a simplicial complex on vertex set $V$ of size $n$, with $h(X)=d$, such that $\mu_k(X)=(d+1)(k+1)-dn$ for some $k$. Then 
		$$X\cong \left(\Delta_d^{(d-1)}\right)^{*(n-k-1)}*\Delta_{(d+1)(k+1)-dn-1}.$$
		In particular, $\dim(X)=k$.
	\end{conjecture}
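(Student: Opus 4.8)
The plan is to re-derive Lew's bound through an explicit comparison with the complete complex $\Delta_{n-1}$ on the same vertex set $V$, and then to read off the structure of $X$ from the case of equality. The starting point is the standard fact that $L_k(\Delta_{n-1})=nI$ on $C^k(\Delta_{n-1})$ for every $-1\le k\le n-2$. Given $f\in C^k(X)$, I extend it by zero to a cochain $\hat f\in C^k(\Delta_{n-1})$; since a $(k-1)$-dimensional non-face of $X$ cannot lie below any $k$-face of $X$, the lower term is unchanged and one obtains
\[
\langle L_k(X)f,f\rangle=n\|f\|^2-\sum_{\substack{\eta\notin X\\ |\eta|=k+2}}\Bigl|\sum_{\substack{v\in\eta\\ \eta\setminus v\in X(k)}}\varepsilon_{\eta,v}\,f(\eta\setminus v)\Bigr|^2,
\]
where $\varepsilon_{\eta,v}\in\{\pm1\}$ are the incidence signs. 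Writing the subtracted sum as $\|Bf\|^2$, where $B$ sends $f$ to its $\Delta_{n-1}$-coboundary restricted to the $(k+1)$-dimensional non-faces, Lew's bound becomes the estimate $\|B\|^2\le(d+1)\bigl(n-k-1-\min_{\sigma}\deg_X(\sigma)\bigr)$, which I prove from two facts: (i) each non-face $\eta$ with $|\eta|=k+2$ has at most $d+1$ facets in $X$, because a facet $\eta\setminus v$ lies in $X$ only when $v$ belongs to every missing face contained in $\eta$, and each such face has at most $d+1$ vertices; and (ii) a fixed $\sigma\in X(k)$ lies in exactly $n-k-1-\deg_X(\sigma)$ non-faces of dimension $k+1$. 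A single Cauchy–Schwarz step combines (i) and (ii).

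Assume now $\mu_k(X)=(d+1)(k+1)-dn$ and let $f$ be a unit eigen-cochain for $\mu_k(X)$. Tightness forces $\min_\sigma\deg_X(\sigma)=0$ and equality throughout the chain above, which I translate into three local rigidity statements on the support $S=\operatorname{supp}(f)$: every $\sigma\in S$ has $\deg_X(\sigma)=0$, hence is a maximal face; every extension $\eta=\sigma\cup w$ (a non-face, since $\deg_X(\sigma)=0$) contains a \emph{unique} missing face $\rho_\eta$, necessarily of dimension exactly $d$; and, by Cauchy–Schwarz equality, $f$ has constant modulus along the $d+1$ facets $\eta\setminus v$ with $v\in\rho_\eta$, so these facets again lie in $S$. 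Because $\sigma\in X$ contains no missing face, $w\in\rho_\eta$, so $\rho_\eta\setminus w$ is a $d$-subset of $\sigma$. For a fixed $\sigma_0\in S$ I expect the $d$-subsets $\{\rho_{\sigma_0\cup w}\setminus w:w\notin\sigma_0\}$ to partition a part of $\sigma_0$ into disjoint blocks, each completed by an outside vertex to a $d$-dimensional missing face. This is the local appearance of a block decomposition $V=B_0\sqcup\bigcup_{i\ge1}B_i$, with the $B_i$ ($i\ge1$) the missing faces and $B_0$ the complement.

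To pass from this local picture to the global isomorphism I propagate the rigidity along the \emph{swap graph} on maximal faces, whose edges exchange one vertex of one block: constant modulus of $f$ across the facets of each relevant $\eta$ shows that $S$ is closed under such swaps, so by connectivity of the swap graph $S$ consists of all maximal faces and $\bigcup_{\sigma\in S}\sigma=V$. The missing faces $B_1,\dots,B_t$ recovered this way are pairwise disjoint $(d+1)$-sets, and a counting argument using $|\sigma_0|=k+1$ and $B_0\sqcup\bigcup_{i\ge1}B_i=V$ yields $t=n-k-1$ and $|B_0|=(d+1)(k+1)-dn$. Once I know that these are \emph{all} the missing faces of $X$, a set is a face of $X$ precisely when it contains no $B_i$, which is exactly the join description of $Z$; this gives $X\cong\bigl(\Delta_d^{(d-1)}\bigr)^{*(n-k-1)}*\Delta_{(d+1)(k+1)-dn-1}$ and, in particular, $\dim(X)=k$.

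The main obstacle is precisely this global step: the eigen-cochain controls $X$ only near its support, so I must rule out missing faces of dimension $<d$, additional $d$-dimensional missing faces meeting the blocks, and faces of dimension $>k$, none of which the local analysis sees directly. I would attack it either by working with the entire $\mu_k$-eigenspace rather than a single $f$—so that the union of supports already forces $\deg_X\equiv 0$ on all of $X(k)$ and pins every relevant non-face—or by an induction that, after verifying that $B_1$ interacts with the rest of $X$ only through the constraint of being a missing face, splits off a factor $\Delta_d^{(d-1)}$ via the join formula $\mu_k(Y*Y')=\min_{\substack{i+j=k-1\\ i,j\ge-1}}\bigl(\mu_i(Y)+\mu_j(Y')\bigr)$ and reduces to a complex on $n-d-1$ vertices. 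Establishing this reducibility—equivalently, that no missing face other than the $B_i$ exists and that none is smaller than dimension $d$—is the heart of the argument; the remaining counting and the identification with $Z$ are then routine.
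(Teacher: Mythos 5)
Your rederivation of Lew's inequality is sound and genuinely different from the paper's: you compare with $L_k(\Delta_{n-1})=nI$ and bound the non-face term by a Schur-type estimate on $B$, whereas the paper runs a Ger\v{s}gorin argument at a maximal entry $\sigma_0$ of the eigenvector, combined with Lew's degree-sum identity and the bound $|N_{\sigma}(v)|\le d$. Your local rigidity statements at equality (zero degree on the support, each relevant non-face $\eta$ having exactly $d+1$ facets in $X$ forming a unique $d$-dimensional missing face, constant modulus of $f$ across them) correspond to the paper's facts (F2), (F3) and its Claim 1, and are recoverable from your Cauchy--Schwarz equality analysis. But the proof stops exactly where the real work begins, and you say so yourself. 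The first genuine gap is disjointness of the blocks: you only \emph{expect} the $d$-sets $\rho_{\sigma_0\cup w}\setminus w$ to be pairwise disjoint, and modulus information alone cannot deliver this --- counting gives no contradiction if blocks overlap, since overlapping blocks simply cover less of $\sigma_0$. The paper proves disjointness (its Claim 3) from a \emph{sign} identity (its Claim 2): writing $L_k=D_k+H_kH_k^T$ and using $D_k(\sigma,\sigma)\ge\mu_k(X)$, equality forces $H_k^T\boldsymbol{x}=0$, i.e.\ linear relations among the signed eigenvector entries; an overlap $w\in M_{\sigma_0}(u)\cap M_{\sigma_0}(v)$ then produces a triple $\sigma_0$, $\sigma=\{u\}\cup(\sigma_0\setminus\{w\})$, $\eta=\{v\}\cup(\sigma_0\setminus\{w\})$ whose six incidence signs multiply to $+1$, contradicting the forced value $-1$. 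Your Cauchy--Schwarz equality step retains only $|f|$, so it is structurally blind to this obstruction, and the swap-graph propagation you propose in its place is neither defined precisely nor proven connected.

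The second gap is the passage from the local picture to $X=X'$: even granting the blocks, you must show that $B_1,\dots,B_t$ are \emph{all} the missing faces, i.e.\ that $X$ is not a proper subcomplex of the join complex $X'\cong(\Delta_d^{(d-1)})^{*(n-k-1)}*\Delta_{r-1}$. The paper closes this with a topological argument absent from your sketch: since $\dim X=\dim X'=k$ forces $X'(k+1)=\varnothing$, the matrix $L_k(X)$ is a principal submatrix of $L_k(X')$ with the same least eigenvalue $r$, so by the principal-submatrix lemma $L_k(X')$ would have a $\mu_k$-eigenvector with a zero entry; but $L_k(X')=L_{k-r}(Y)+rI$ with $Y\cong(\Delta_d^{(d-1)})^{*(n-k-1)}$ a triangulation of $S^{k-r}$, and every proper subcomplex of a sphere triangulation has trivial top cohomology, hence positive spectral gap by the Hodge theorem --- ruling out zero entries. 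Neither of your two suggested remedies (working with the full $\mu_k$-eigenspace, or induction via the join formula for spectral gaps) is carried out, and you explicitly label this step ``the heart of the argument.'' As it stands, the proposal establishes the inequality by an attractive alternative route and recovers the local rigidity, but it does not prove the conjecture: both the block-disjointness and the $X=X'$ step require ideas (sign coherence via $H_k^T\boldsymbol{x}=0$, and the sphere-cohomology/Hodge argument) that the proposal neither contains nor replaces.
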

	
In the same paper, Lew showed that Conjecture \ref{conj::main} is true for $d=1$. In this paper, we completely confirm  Conjecture \ref{conj::main}.
	
	\begin{theorem}\label{thm::main}
		Conjecture \ref{conj::main} is true for all $d\geq 1$.
	\end{theorem}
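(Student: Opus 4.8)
The plan is to recast the reduced Laplacian as a perturbation of the complete complex and then read off the extremal structure from the equality case of a single quadratic-form inequality.

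First, I would establish an operator identity that drives everything. Extending each $f \in C^k(X)$ by zero to a cochain $\hat f$ on the complete complex $\Delta_{n-1}$ and using the classical fact $L_k(\Delta_{n-1}) = nI$, I would observe that the downward part contributes nothing (a $(k-1)$-face outside $X$ has no coface in $X$ by downward closure), so that
\[
\langle L_k(X) f, f\rangle = n\|f\|^2 - \|Bf\|^2, \qquad\text{i.e.}\qquad L_k(X) = nI - B^*B,
\]
where $B : C^k(X) \to C^{k+1}(\overline{X})$ is the coboundary into the missing $(k+1)$-faces, $(Bf)(\eta) = \sum_{v \in \eta,\, \eta\setminus v \in X} \pm f(\eta\setminus v)$. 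Thus $\mu_k(X) = n - \lambda_{\max}(B^*B)$, and the bound of Theorem~\ref{thm::Lew_bound} becomes exactly $\lambda_{\max}(B^*B) \le (d+1)(n-k-1)$.

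Second, I would reprove this operator bound so as to expose equality. The key combinatorial lemma is that for any $\eta \notin X$ the set $\tau(\eta) = \{v\in\eta : \eta\setminus v\in X\}$ is contained in every minimal missing face of $\eta$; since $h(X)=d$ this gives $|\tau(\eta)| \le d+1$. Feeding this together with Cauchy--Schwarz into $\|Bf\|^2 = \sum_\eta \big|\sum_{v\in\tau(\eta)} \pm f(\eta\setminus v)\big|^2$ and collecting terms by $\sigma = \eta\setminus v$ yields $\|Bf\|^2 \le (d+1)\sum_\sigma |f(\sigma)|^2\big((n-k-1)-\deg_X(\sigma)\big) \le (d+1)(n-k-1)\|f\|^2$, recovering both of Lew's bounds. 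Equality, applied to a minimal eigenvector $f$, then forces for every $\sigma\in\mathrm{supp}(f)$ that $\deg_X(\sigma)=0$ (so $\sigma$ is a maximal face); that for each $w\notin\sigma$ the non-face $\sigma\cup w$ contains a \emph{unique} minimal missing face $\mu_w$ of dimension exactly $d$ with $\tau(\sigma\cup w)=\mu_w$; and, via the Cauchy--Schwarz equality case, that $\mu_w=\{w\}\cup s_w$ with $s_w\in\binom{\sigma}{d}$ and $f\big((\sigma\setminus u)\cup w\big)=\pm f(\sigma)\neq 0$ for every $u\in s_w$. Finally I would invoke the structural fact that $h(X)=d$ forces $X$ to be determined by its $d$-skeleton (a set of size $\ge d+1$ is a face iff it contains no minimal missing face), so that pinning down the minimal non-faces pins down $X$: once the missing $d$-faces are shown to be pairwise disjoint, with no lower-dimensional missing faces and exactly $t=n-k-1$ of them leaving $r=(d+1)(k+1)-dn$ free vertices, $X$ is precisely $Z=\big(\Delta_d^{(d-1)}\big)^{*(n-k-1)}*\Delta_{(d+1)(k+1)-dn-1}$ and $\dim X=k$.

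The main obstacle is the globalization in this last step: the spectral hypothesis directly constrains only the (possibly one-dimensional) top eigenspace of $B^*B$, so I must show that the support of a single minimal eigenvector, propagated through the relations $f((\sigma\setminus u)\cup w)=\pm f(\sigma)$, already meets every vertex and encodes the full block partition — and in particular that the uniqueness clause globally forces disjointness of the missing $d$-faces and excludes lower-dimensional missing faces, rather than merely near one simplex. I expect this to require a careful connectivity/propagation argument (an induction stripping free vertices as cone points, using $\mu_k(X*\{v\}) = \min(\mu_{k-1}(X),\mu_k(X))+1$, is a natural way to reduce $r$ and isolate a single block) together with a sign-consistency analysis forcing the eigenvector to be the tensor product of the spherical fundamental cocycles of the factors.
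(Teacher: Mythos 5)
Your first half is correct and takes a genuinely different route from the paper. The identity $L_k(X)=nI-B^*B$ (extend by zero into $\Delta_{n-1}$, where $L_k(\Delta_{n-1})=nI$, and note that a $(k-1)$-face outside $X$ indeed has no coface in $X$) is valid, as is the key observation that $\tau(\eta)$ lies in every minimal missing face inside $\eta$, whence $|\tau(\eta)|\leq d+1$; the Cauchy--Schwarz computation then recovers Theorem \ref{thm::Lew_bound}, and its equality case correctly yields $\deg_X(\sigma)=0$ on the support, a unique missing face $\mu_w=\{w\}\cup s_w$ of dimension exactly $d$ inside each $\sigma\cup w$, and the transfer relations $f((\sigma\setminus u)\cup w)=\pm f(\sigma)$. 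This replaces the paper's combination of the Ger\v{s}gorin equality case, the decomposition $L_k=D_k+H_kH_k^T$ (Lemma \ref{lem::L_k}) and Lemma \ref{lem::count_d}, and your ``unique minimal missing face'' formulation is even a slight sharpening of the paper's (F2)--(F3) and Claim \ref{claim::1}.

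However, the proof stops exactly where the real work begins, and you say so yourself: two essential rigidity steps are announced but not carried out. First, pairwise disjointness of the missing $d$-faces is not proved. In the paper this is Claims \ref{claim::2} and \ref{claim::3}: from $H_k^T\boldsymbol{x}=0$ (extracted via the estimate on $D_k$) together with the Ger\v{s}gorin equality case one derives a product-of-six-signs relation that must equal $-1$, while a suitable vertex order makes all six signs $+1$ when $u\in s_w\cap s_{w'}$ --- a concrete computation your equality conditions can support (apply them to the missing face $(\sigma\setminus u)\cup\{w,w'\}$, whose $\tau$-set contains both $(\sigma\setminus u)\cup w$ and $(\sigma\setminus u)\cup w'$, and close the sign cycle through $\sigma$), but which you have not performed; without it the block structure, the count $t=n-k-1$, and the exclusion of overlaps are assumptions. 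Second, even granting the block structure you only obtain $X\subseteq Z$, and nothing in your argument excludes a \emph{proper} subcomplex of $Z$ attaining the same gap; a priori $X$ could still have additional missing faces of dimension $\leq d$. The paper's resolution (F4) observes that $L_k(X)$ is a principal submatrix of $L_k(Z)$ (both complexes being $k$-dimensional), reduces via Lemma \ref{lem::principal} to showing the minimal eigenvector of $L_k(Z)$ is nowhere zero, and proves this with genuine topological input: $Y=(\Delta_d^{(d-1)})^{*(n-k-1)}$ triangulates $S^{k-r}$ (Lemmas \ref{lem::skeleton}, \ref{lem::SnSm}, \ref{lem::join}), any proper subcomplex of a sphere triangulation has trivial top cohomology, and the Hodge theorem (Theorem \ref{thm::Hodge}) converts this into a strictly positive gap. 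Your alternative --- that the minimal eigenvector is the tensor product of the spherical fundamental cocycles of the factors, hence nowhere zero --- is a viable substitute but is asserted rather than proved, and the cone identity $\mu_k(X*\{v\})=\min(\mu_{k-1}(X),\mu_k(X))+1$ on which your proposed induction rests is itself an unproved (and not obviously correct in the generality you need) lemma. So the proposal is a sound and genuinely different derivation of the local equality constraints, but the global disjointness argument and the maximality step $X=Z$ --- the heart of the paper's proof --- remain open in it.
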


	\section{Preliminaries}\label{section::2}
	
	Let $X$ be an abstract simplicial complex on  vertex set $V$ of size $n$. An \textit{ordered simplex} is a simplex with a total order of its vertices. For a simplex $\{v_0, \ldots, v_k\} \in X$ with its vertices ordered by $v_0 \prec v_1 \prec \cdots \prec v_k$, we denote the corresponding ordered simplex by $[v_0, \ldots, v_k]$. For two disjoint ordered simplices $\sigma = [v_0, \ldots, v_k]$ and $\tau = [u_0, \ldots, u_s]$, we denote by $[\sigma, \tau] = [v_0, \ldots, v_k, u_0, \ldots, u_s]$ the ordered union of $\sigma$ and $\tau$.

For two ordered simplices $\tau$ and $\sigma$ with $\tau \subseteq \sigma$, we use $(\sigma : \tau)$ to represent the sign of the permutation on $\sigma$ that maps the ordered simplex $\sigma$ to the ordered simplex $[\sigma \setminus \tau, \tau]$, where the order on $\sigma \setminus \tau$ is the one induced by the order on $\sigma$.

In what follows, we fix a total order $\prec$ on the vertex set $V$ of $X$. Then, for every $k \geq -1$, we can identify $X(k)$ with the set of all the ordered $k$-dimensional simplices in $X$ with order induced by $\prec$.

	A \textit{simplicial $k$-cochain} $\varphi$ is a skew-symmetric function on all ordered $k$-dimensional simplices. More specifically, $\varphi$ is a simplicial $k$-cochain if for any two ordered $k$-dimensional simplices $\sigma,\tau$ in $X$ that are equal as sets, we have $\varphi(\sigma)=(\sigma:\tau)\varphi(\tau)$. 
	
	For a simplex $\sigma\in X$, we say that $\tau$ is a \textit{face} of $\sigma$ if $\tau\subseteq\sigma$. Let $\sigma(k)$ denote the set of all $k$-dimensional faces of $\sigma$. Clearly, $\sigma(k)\subseteq X(k)$. 	Let  $C^k(X)$ be the space of real valued simplicial $k$-cochains of $X$. The \textit{coboundary operator} $d_k:C^k(X)\rightarrow C^{k+1}(X)$ is the linear operator defined by letting
	\begin{equation*}
		d_k\varphi(\sigma)=\sum_{\tau\in\sigma(k)}(\sigma:\tau)\varphi(\tau)
	\end{equation*}
	for any $\varphi\in C^k(X)$ and   $\sigma\in X(k+1)$. For $k=-1$, we set $C^{-1}(X)=\mathbb{R}$. Then  $d_{-1}a(v)=a$ for every $a\in\mathbb{R}$ and $v\in V$.  Furthermore, we define an inner product on $C^k(X)$ by 
	\begin{equation*}
		\left\langle\varphi,\psi\right\rangle=\sum_{\sigma\in X(k)}\varphi(\sigma)\psi(\sigma)
	\end{equation*}
for any $\varphi,\psi\in C^k(X)$. Let $d_k^*:C^{k+1}(X)\rightarrow C^k(X)$ be the adjoint of $d_k$ with respect to this inner product. The \textit{reduced $k$-dimensional Laplacian}  of $X$ is given by
	\[L_k=L_k(X)=\begin{cases}
		d_{-1}^*d_{-1}&\mbox{if $k=-1$},\\
		d_{k-1}d_{k-1}^*+d_{k}^*d_{k}&\mbox{if $k\geq 0$}.
	\end{cases}\] 
	Note that $L_k$ is a positive semi-definite operator from $C^k(X)$ to itself. In particular, for $k=-1$, $L_{-1}$ is a mapping from $\mathbb{R}$ to $\mathbb{R}$, and  $L_{-1}(a)=|V|=n$ for all $a\in \mathbb{R}$. The $k$-th \textit{spectral gap} of $X$, denoted by $\mu_k(X)$, is the smallest eigenvalue of $L_k$.

For any $\sigma\in X(k)$, we define the real valued function $1_\sigma$ by letting 
	\[
	1_\sigma(\tau)=\begin{cases}
		(\sigma:\tau) &\mbox{if $\sigma=\tau$ (as sets)},\\
		0&\mbox{otherwise.}
	\end{cases}
	\]
Then the set $\{1_\sigma:\sigma\in X(k)\}$ forms a basis of the space $C^k(X)$. With respect to this basis, we still use $L_k$ to denote the matrix
	representation of the operator $L_k$. Additionally, the entry of the matrix 
	$L_k$ at index $(1_\sigma,1_\tau)$ is denoted by $L_k(\sigma, \tau)$.

	For any $\sigma,\tau\in X(k)$, we write  $\sigma\sim\tau$ if  $|\sigma\cap \tau|=k$ and $\sigma\cup\tau\notin X(k+1)$. Then the value of  $L_k(\sigma,\tau)$ can be determined explicitly based on whether $\sigma = \tau$, $\sigma \sim \tau$, or neither of the two situations occurs.
	
	\begin{lemma}[{\cite{DR02,Gol02}}]\label{lem::Matrix_Rep}
		Let $k\geq 0$. Then
		\begin{equation*}
			L_k(\sigma,\tau)=\begin{cases} 
			\deg_X(\sigma)+k+1 & \mbox{if $\sigma=\tau$}, \\
				(\sigma:\sigma\cap\tau)\cdot(\tau:\sigma\cap\tau) &\mbox{if $\sigma\sim\tau$}, \\
				0 & \mbox{otherwise}.
			\end{cases}   
		\end{equation*}
	\end{lemma}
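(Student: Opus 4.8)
The plan is to expand $L_k(\sigma,\tau)$ directly from the definition $L_k = d_{k-1}d_{k-1}^* + d_k^*d_k$, treating the two summands separately. Because the vertices carry the fixed order $\prec$, each $k$-simplex occurs exactly once as an ordered simplex in $X(k)$, so $\{1_\sigma : \sigma \in X(k)\}$ is an \emph{orthonormal} basis of $C^k(X)$; consequently $L_k(\sigma,\tau) = \langle L_k 1_\tau, 1_\sigma\rangle$. Applying the adjoint property to each summand, I would rewrite this as
\begin{equation*}
L_k(\sigma,\tau) = \langle d_k 1_\tau, d_k 1_\sigma\rangle + \langle d_{k-1}^* 1_\tau, d_{k-1}^* 1_\sigma\rangle ,
\end{equation*}
so the problem splits into an ``upward'' contribution coming from $(k+1)$-simplices containing $\sigma$ and a ``downward'' contribution coming from the $(k-1)$-faces of $\sigma$.

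For the upward term, a short computation from the definition of $d_k$ gives $d_k 1_\sigma(\eta) = (\eta:\sigma)$ when $\sigma \in \eta(k)$ and $0$ otherwise, for each $\eta \in X(k+1)$. Hence $\langle d_k 1_\tau, d_k 1_\sigma\rangle = \sum_{\eta} (\eta:\sigma)(\eta:\tau)$, the sum ranging over all $\eta \in X(k+1)$ containing both $\sigma$ and $\tau$. For the downward term, I would first verify that $d_{k-1}^* 1_\sigma = \sum_{\mu \in \sigma(k-1)} (\sigma:\mu)\, 1_\mu$, by evaluating $\langle d_{k-1}^* 1_\sigma, 1_\mu\rangle = \langle 1_\sigma, d_{k-1}1_\mu\rangle$; this yields $\langle d_{k-1}^* 1_\tau, d_{k-1}^* 1_\sigma\rangle = \sum_{\mu} (\sigma:\mu)(\tau:\mu)$, the sum ranging over all common $(k-1)$-faces $\mu$ of $\sigma$ and $\tau$.

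With these two formulas the three cases fall out by counting. If $\sigma = \tau$, the upward sum contributes $1$ for each of the $\deg_X(\sigma)$ simplices in $X(k+1)$ containing $\sigma$, and the downward sum contributes $1$ for each of the $k+1$ faces in $\sigma(k-1)$, giving $L_k(\sigma,\sigma) = \deg_X(\sigma) + k + 1$. If $\sigma \neq \tau$ and $|\sigma\cap\tau| \leq k-1$, then $\sigma$ and $\tau$ share no $(k-1)$-face and lie in no common $(k+1)$-simplex, so both sums are empty and $L_k(\sigma,\tau) = 0$. The remaining case is $|\sigma\cap\tau| = k$: then $\mu := \sigma\cap\tau$ is the unique common $(k-1)$-face and $\eta := \sigma\cup\tau$ is the unique possible common $(k+1)$-simplex, so the downward sum equals $(\sigma:\mu)(\tau:\mu)$ while the upward sum equals $(\eta:\sigma)(\eta:\tau)$ if $\eta \in X(k+1)$ and $0$ otherwise. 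When $\sigma\sim\tau$, by definition $\eta\notin X(k+1)$, the upward sum vanishes, and we are left with $L_k(\sigma,\tau) = (\sigma:\sigma\cap\tau)(\tau:\sigma\cap\tau)$, as claimed.

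The one step requiring genuine care---and which I expect to be the main obstacle---is the final subcase $|\sigma\cap\tau| = k$ with $\eta = \sigma\cup\tau \in X(k+1)$, which is \emph{not} among the listed cases and so must produce $L_k(\sigma,\tau) = 0$. Here both sums are nonzero, so I must establish the sign cancellation
\begin{equation*}
(\eta:\sigma)(\eta:\tau) = -(\sigma:\mu)(\tau:\mu), \qquad \mu = \sigma\cap\tau,\ \eta = \sigma\cup\tau.
\end{equation*}
Writing $\sigma = \mu\cup\{a\}$ and $\tau = \mu\cup\{b\}$ with $a\neq b$, I would reduce each of the four signs to a parity: $(\sigma:\mu)$ equals $(-1)$ raised to the number of vertices of $\mu$ preceding $a$ in $\prec$, and similarly for the others, with $(\eta:\sigma)$ and $(\eta:\tau)$ acquiring an extra contribution according to whether $a\prec b$ or $b\prec a$. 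Comparing these parities (assuming without loss of generality $a\prec b$) shows the two products differ by exactly one sign, which proves the cancellation and completes the verification of all cases.
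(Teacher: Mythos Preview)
Your argument is correct and complete. The decomposition into the upward term $\langle d_k 1_\tau, d_k 1_\sigma\rangle$ and the downward term $\langle d_{k-1}^* 1_\tau, d_{k-1}^* 1_\sigma\rangle$ is the natural one, and your case analysis is accurate. In particular, the sign cancellation you flag as the delicate step is handled correctly: with $p=|\{m\in\mu:m\prec a\}|$ and $q=|\{m\in\mu:m\prec b\}|$ one has $(\sigma:\mu)(\tau:\mu)=(-1)^{p+q}$, while $(\eta:\sigma)(\eta:\tau)=(-1)^{p+q+1}$ because exactly one of $a\prec b$, $b\prec a$ holds, and this gives the required minus sign.

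There is, however, nothing to compare against: the paper does not prove this lemma. It is quoted as a known formula with citations to Duval--Reiner and Goldberg, and no argument is supplied. Your write-up therefore fills in a proof that the paper deliberately omits; it follows the standard route one finds in those references.
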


Let $H_k$ denote the matrix with rows indexed by $X(k)$ and columns indexed by $E_k:=\{\{\sigma,\tau\}:\sigma,\tau\in X(k),\sigma\sim\tau\}$ defined by
	\begin{equation*}
		H_k(\sigma,\{\eta,\tau\})=\begin{cases} 
			(\sigma:\eta\cap\tau) & \mbox{if}~\sigma\in\{\eta,\tau\}, \\
			0 & \mbox{otherwise}.
		\end{cases}   
	\end{equation*}
	Let $D_k$ be the diagonal matrix indexed by $X(k)$ with diagonal elements \begin{equation}\label{equ::diag}
		D_k(\sigma,\sigma)=2(k+1)+(k+2)\deg_X(\sigma)-\sum_{\tau\in \sigma(k-1)}\deg_X(\tau).
	\end{equation}
Then the matrix $L_k$ can be expressed as the sum of  $D_k$ and $H_kH_k^T$.
	\begin{lemma}[{\cite[Remark]{Lew20}}]\label{lem::L_k}
		\[L_k=D_k+H_kH_k^T.\]
	\end{lemma}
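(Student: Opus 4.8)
The plan is to verify the identity $L_k = D_k + H_k H_k^T$ entry by entry, comparing against the explicit description of $L_k$ furnished by Lemma~\ref{lem::Matrix_Rep}. Since $D_k$ is diagonal, the off-diagonal entries of $D_k + H_k H_k^T$ coincide with those of $H_k H_k^T$ alone, so I would treat the off-diagonal and diagonal cases separately and show each matches $L_k$.

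For the off-diagonal entries, I would compute $(H_k H_k^T)(\sigma,\tau)=\sum_{\{\eta,\eta'\}\in E_k} H_k(\sigma,\{\eta,\eta'\})\,H_k(\tau,\{\eta,\eta'\})$ for $\sigma\neq\tau$. By the definition of $H_k$, a summand is nonzero only when both $\sigma$ and $\tau$ lie in the pair $\{\eta,\eta'\}$, which forces $\{\eta,\eta'\}=\{\sigma,\tau\}$ and in particular requires $\sigma\sim\tau$. In that case the unique surviving summand equals $(\sigma:\sigma\cap\tau)(\tau:\sigma\cap\tau)$, exactly the value of $L_k(\sigma,\tau)$ recorded in Lemma~\ref{lem::Matrix_Rep}; when $\sigma\not\sim\tau$ both sides vanish. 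Thus the off-diagonal entries agree at once.

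For the diagonal, the key observation is that $(H_kH_k^T)(\sigma,\sigma)=\sum_{\{\eta,\eta'\}\in E_k} H_k(\sigma,\{\eta,\eta'\})^2$ simply counts the number $N(\sigma)$ of simplices $\tau\in X(k)$ with $\tau\sim\sigma$, since every nonzero entry of $H_k$ equals $\pm 1$. Matching the $(\sigma,\sigma)$ entries of the two sides then reduces, after inserting $L_k(\sigma,\sigma)=\deg_X(\sigma)+k+1$ and the defining formula \eqref{equ::diag} for $D_k(\sigma,\sigma)$, to the purely combinatorial identity
\[
N(\sigma)=\sum_{\tau\in\sigma(k-1)}\deg_X(\tau)-(k+1)\bigl(\deg_X(\sigma)+1\bigr).
\]

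The main work, and the step I expect to be the crux, is establishing this count by double counting. I would parametrize each $\tau'\in X(k)$ with $|\sigma\cap\tau'|=k$ uniquely by its common $(k-1)$-face $\sigma\cap\tau'=\sigma\setminus\{v\}$ together with its extra vertex $w\notin\sigma$, so that $\tau'=(\sigma\setminus\{v\})\cup\{w\}$. Summing over the $k+1$ faces $\tau=\sigma\setminus\{v\}\in\sigma(k-1)$, each contributes $\deg_X(\tau)-1$ such candidates (discarding the extension that returns to $\sigma$ itself), giving $\sum_{\tau\in\sigma(k-1)}\deg_X(\tau)-(k+1)$ simplices meeting $\sigma$ in a $(k-1)$-face. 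From these I would subtract the ones violating $\sigma\cup\tau'\notin X(k+1)$: for each of the $\deg_X(\sigma)$ vertices $w$ with $\sigma\cup\{w\}\in X(k+1)$, every one of the $k+1$ faces $\tau$ yields a $\tau'=\tau\cup\{w\}$ with $\sigma\cup\tau'=\sigma\cup\{w\}\in X(k+1)$, accounting for exactly $(k+1)\deg_X(\sigma)$ excluded simplices. Subtracting produces the displayed identity for $N(\sigma)$, and a short cancellation then gives $D_k(\sigma,\sigma)+N(\sigma)=\deg_X(\sigma)+k+1$, completing the verification. The only delicate points are checking that the parametrization of $\tau'$ is a genuine bijection and that the excluded simplices are counted without overlap across the different $(k-1)$-faces of $\sigma$.
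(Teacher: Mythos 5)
Your verification is correct, and it is essentially the argument behind the cited remark in Lew's paper: the off-diagonal entries match trivially, and your double-counting identity for $N(\sigma)$ is exactly the degree-sum identity the paper records as Lemma~\ref{lem::sum_deg_2}, since $N(\sigma)=\sum_{v\in V\setminus\sigma,\,v\notin\mathrm{lk}(X,\sigma)}|N_\sigma(v)|$. Both delicate points you flag do hold: $\tau'\mapsto(\sigma\cap\tau',\,\tau'\setminus\sigma)$ is a bijection, and the excluded simplices are counted once each because $\tau'$ determines both its common face and its extra vertex.
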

	
Observe that the coboundary operator $d_k:C^k(X)\rightarrow C^{k+1}(X)$ satisfies the relation $d_kd_{k-1}=0$. Consequently,  we have  $\mathrm{Im}(d_{k-1})\subseteq \mathrm{Ker}(d_k)$. If we consider  $\mathrm{Im}(d_{k-1})$ and $\mathrm{Ker}(d_k)$ as groups, then the quotient group  $\widetilde{H}^k(X;\mathbb{R}):=\mathrm{Ker}(d_k)/\mathrm{Im}(d_{k-1})$ is called the $k$-th \textit{reduced cohomology group} of $X$ with real coefficients. The following theorem establishes a relationship between the cohomology groups and the Laplacian operators of $X$.
	
	\begin{theorem}[Simplicial Hodge theorem, \cite{Eck44}]\label{thm::Hodge}
		\[\widetilde{H}^k(X;\mathbb{R})\cong \mathrm{Ker}(L_k).\]
	\end{theorem}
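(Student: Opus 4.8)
The plan is to run the standard finite-dimensional Hodge argument: characterize $\mathrm{Ker}(L_k)$ as the space of harmonic cochains, produce an orthogonal decomposition of $C^k(X)$ coming from the self-adjointness of $L_k$, and then show that each reduced cohomology class contains exactly one harmonic representative. I would treat $k\geq 0$ throughout; the case $k=-1$ follows from a direct computation, since $L_{-1}(a)=na$ forces $\mathrm{Ker}(L_{-1})=0$ for nonempty $X$, matching $\widetilde{H}^{-1}(X;\mathbb{R})=0$.

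First I would establish the harmonic characterization. Because $C^k(X)$ is finite dimensional and $L_k=d_{k-1}d_{k-1}^*+d_k^*d_k$, the operator $L_k$ is self-adjoint and positive semi-definite, and for every $\varphi\in C^k(X)$
$$\langle L_k\varphi,\varphi\rangle=\langle d_{k-1}^*\varphi,d_{k-1}^*\varphi\rangle+\langle d_k\varphi,d_k\varphi\rangle=\|d_{k-1}^*\varphi\|^2+\|d_k\varphi\|^2.$$
Hence $L_k\varphi=0$ if and only if $d_k\varphi=0$ and $d_{k-1}^*\varphi=0$, so that $\mathrm{Ker}(L_k)=\mathrm{Ker}(d_k)\cap\mathrm{Ker}(d_{k-1}^*)$. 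Using the elementary adjoint identity $\mathrm{Ker}(d_{k-1}^*)=\mathrm{Im}(d_{k-1})^\perp$, this rewrites as $\mathrm{Ker}(L_k)=\mathrm{Ker}(d_k)\cap\mathrm{Im}(d_{k-1})^\perp$. In particular every harmonic cochain is a cocycle, so the assignment $\Phi(\varphi)=[\varphi]$ defines a linear map $\Phi:\mathrm{Ker}(L_k)\to\widetilde{H}^k(X;\mathbb{R})$.

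Next I would verify that $\Phi$ is a bijection. For injectivity, if $\Phi(\varphi)=0$ then $\varphi\in\mathrm{Im}(d_{k-1})$, while the characterization above gives $\varphi\in\mathrm{Im}(d_{k-1})^\perp$; hence $\varphi=0$. For surjectivity I would use the orthogonal range-kernel splitting $C^k(X)=\mathrm{Ker}(L_k)\oplus\mathrm{Im}(L_k)$ valid for the self-adjoint operator $L_k$. Given a cocycle $\psi\in\mathrm{Ker}(d_k)$, write $\psi=h+L_k\eta$ with $h\in\mathrm{Ker}(L_k)$, and set $\beta=d_{k-1}^*\eta$ and $\delta=d_k\eta$, so that $\psi=h+d_{k-1}\beta+d_k^*\delta$. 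Applying $d_k$ and using $d_kd_{k-1}=0$ together with $d_k\psi=d_kh=0$ yields $d_kd_k^*\delta=0$, whence $\|d_k^*\delta\|^2=\langle d_k^*\delta,d_k^*\delta\rangle=\langle d_kd_k^*\delta,\delta\rangle=0$ and therefore $d_k^*\delta=0$. Thus $\psi=h+d_{k-1}\beta$ and $[\psi]=[h]=\Phi(h)$, which proves surjectivity and hence the claimed isomorphism.

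The argument is entirely formal, so there is no deep obstacle; the one point that needs care is the surjectivity step, where one must extract a harmonic representative from an arbitrary cocycle and check that the $\mathrm{Im}(d_k^*)$-component is annihilated by the cocycle condition. All the remaining ingredients — the self-adjointness and positive semi-definiteness of $L_k$, the orthogonal-complement description $\mathrm{Ker}(d_{k-1}^*)=\mathrm{Im}(d_{k-1})^\perp$, and the orthogonal kernel-image splitting for a self-adjoint operator — are standard facts of linear algebra over $\mathbb{R}$, and the only structural input from the simplicial setting is the relation $d_kd_{k-1}=0$ recorded just before the statement.
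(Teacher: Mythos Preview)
Your argument is correct and is exactly the classical finite-dimensional Hodge proof. Note, however, that the paper does not supply its own proof of this theorem: it is quoted as a known result with a citation to Eckmann, so there is no in-paper proof to compare against. Your write-up would serve well as the omitted justification.
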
	
	
	\section{Geometric realization}\label{section::3}
	
	Let $\boldsymbol{v}_0,\boldsymbol{v}_1,\ldots,\boldsymbol{v}_s$ be points in $\mathbb{R}^d$. We call them \textit{affinely dependent} if there are real numbers $\alpha_0,\alpha_1,\ldots,\alpha_s$, not all of them $0$, such
	that $\sum_{i=0}^s\alpha_i\boldsymbol{v}_i=0$ and $\sum_{i=0}^s\alpha_i=0$. Otherwise, $\boldsymbol{v}_0,\boldsymbol{v}_1,\ldots,\boldsymbol{v}_s$ are called \textit{affinely independent}.
	
	Let $F$ be a finite affinely independent set in $\mathbb{R}^d$. The (geometric) \textit{simplex} $\sigma$ spanned by $F$ is defined as the convex hull \[\mathrm{conv}(F):=\left\{\sum_{\boldsymbol{v}\in F}\alpha_{\boldsymbol{v}}\boldsymbol{v}:\sum_{\boldsymbol{v}\in F}\alpha_{\boldsymbol{v}}=1~\mbox{and}~\alpha_{\boldsymbol{v}}\geq 0~\mbox{for all $\boldsymbol{v}\in F$}\right\}.\]
	The points of $F$ are called the \textit{vertices} of $\sigma$, and the \textit{dimension} of $\sigma$ is $\dim(\sigma)=|F|-1$. Thus every $k$-dimensional simplex has $k+1$ vertices. The convex hull of an arbitrary subset of vertices of a simplex $\sigma$ is called a \textit{face} of $\sigma$. 
	
	A nonempty family $\Delta$ of simplices is a \textit{geometric simplicial complex} if the following two conditions hold:
	\begin{enumerate}[(1)]
		\item each face of any simplex $\sigma\in \Delta$ is also a simplex of $\Delta$;
		\item the intersection $\sigma_1\cap \sigma_2$ of any two simplices $\sigma_1,\sigma_2\in \Delta$ is a face of both $\sigma_1$ and $\sigma_2$.
	\end{enumerate}

Let $\Delta$ be a geometric simplicial complex.	The union of all simplices in $\Delta$ is the \textit{polyhedron} of $\Delta$ and is denoted by $\|\Delta\|$.
	The \textit{dimension} of  $\Delta$ is defined as $\dim(\Delta):=\max\{\dim(\sigma):\sigma\in\Delta\}$. The vertex set of $\Delta$, denoted by $V(\Delta)$, is the union of the vertex sets of all simplices of $\Delta$. A \textit{subcomplex} of  $\Delta$ is a subset of $\Delta$
	that is itself a geometric simplicial complex. 
	
	Note that the set of all faces of a (geometric) simplex is a geometric simplicial complex.  A geometric simplicial complex consisting of all faces of an arbitrary $k$-dimensional simplex (including the simplex itself) will be denoted by $\sigma^k$.

	Each geometric simplicial complex $\Delta$ determines an abstract simplicial complex $X$. The vertices of the abstract simplicial complex $X$ are all vertices of the simplices of $\Delta$, and the sets in the abstract simplicial complex $X$ are just the vertex sets of the simplices of $\Delta$.  In this situation, we call $\Delta$ a \textit{geometric realization} of $X$, and the polyhedron of $\Delta$ is also referred to as a \textit{polyhedron} of $X$. 
	
	Conversely, every finite  abstract simplicial complex $X$ has a geometric realization $\Delta$.  Let $V=V(X)$ and $n=|V|$.  Let us identify $V$ with the vertex set of  $\sigma^{n-1}$. We define a subcomplex $\Delta$  of $\sigma^{n-1}$ by $\Delta := \{\mathrm{conv}(F): F \in  X\}$. This is a geometric simplicial complex, and its associated abstract simplicial complex is $X$. Thus every abstract simplicial complex on $n$ vertices has a geometric realization in  $\mathbb{R}^{n-1}$.
	
	Let $\mathcal{X}$ and $\mathcal{Y}$ be two topological spaces. A mapping $f:\mathcal{X}\rightarrow \mathcal{Y}$ is a \textit{homeomorphism} if $f$ is bijective and both $f$ and $f^{-1}$ are continuous. 
	We say that $\mathcal{X}$ and $\mathcal{Y}$ are  \textit{homeomorphic}, denoted by $\mathcal{X}\cong\mathcal{Y}$, if there is a homeomorphism between $\mathcal{X}$ and $\mathcal{Y}$. By elementary algebraic topology, the polyhedron of an abstract simplicial complex $X$ is unique up to homeomorphism (cf.  \cite[p. 14]{Mat03}), and so we can denote it by $\|X\|$. An abstract simplicial complex $X$ is called a \textit{triangulation} of a  topological space $\mathcal{X}$ if  its polyhedron $\|X\|$ is homeomorphic to  $\mathcal{X}$.

Let $\boldsymbol{x}=(x_1,\ldots,x_d)^T$ be a point in $\mathbb{R}^d$. We define the \textit{norm} of $\boldsymbol{x}$ as  \[\|\boldsymbol{x}\|=\left[\sum_{i=1}^dx_i^2\right]^{\frac{1}{2}}.\] The $(d-1)$-\textit{sphere}, denoted by $S^{d-1}$, is the set of all points $\boldsymbol{x}$ of $\mathbb{R}^d$ for which $\|\boldsymbol{x}\|=1$. Recall that $\Delta_d^{(d-1)}$ is the $(d-1)$-dimensional skeleton of the complete simplicial complex $\Delta_d$ on $d+1$ vertices. The geometric realization of $\Delta_d^{(d-1)}$ is the \textit{boundary} of $\sigma^d$, that is, the subcomplex of $\sigma^d$ obtained by deleting the single $d$-dimensional simplex (but retaining all of its proper faces). According to  elementary algebraic topology,  the polyhedron of the boundary of $\sigma^d$ is homeomorphic to $S^{d-1}$, and so we have the following result.
	
	\begin{lemma}[{\cite[p.  10]{Mat03}}]\label{lem::skeleton}
		Let $d$ be a positive integer. Then  \[\|\Delta_d^{(d-1)}\|\cong S^{d-1}.\]  
	\end{lemma}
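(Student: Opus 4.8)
The plan is to fix a full-dimensional geometric realization of $\Delta_d$ and then identify $\|\Delta_d^{(d-1)}\|$ with the topological boundary of that simplex, which I will map homeomorphically onto $S^{d-1}$ by radial projection from an interior point. Concretely, I would choose $d+1$ affinely independent points $\boldsymbol{v}_0,\ldots,\boldsymbol{v}_d\in\mathbb{R}^d$ and set $K=\mathrm{conv}(\{\boldsymbol{v}_0,\ldots,\boldsymbol{v}_d\})$, so that the geometric realization of $\Delta_d$ is $\sigma^d$ with polyhedron $K$. Every point of $K$ has unique barycentric coordinates $\alpha_0,\ldots,\alpha_d\geq 0$ with $\sum_i\alpha_i=1$, and it lies on a proper face of $\sigma^d$ precisely when some $\alpha_i=0$. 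Since $K$ is full-dimensional, the set of points with all $\alpha_i>0$ is exactly the interior of $K$ in $\mathbb{R}^d$, so the polyhedron of $\Delta_d^{(d-1)}$---the union of all proper faces---coincides with the topological boundary $\partial K$ of the convex body $K$.

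Next I would reduce the claim to the elementary geometric fact that the boundary of a compact convex body with nonempty interior is homeomorphic to a sphere. After translating so that an interior point $\boldsymbol{b}$ (for instance the barycenter $\boldsymbol{b}=\frac{1}{d+1}\sum_i\boldsymbol{v}_i$) is the origin, I would define the radial projection
\[
\pi:\partial K\to S^{d-1},\qquad \pi(\boldsymbol{x})=\frac{\boldsymbol{x}}{\|\boldsymbol{x}\|}.
\]
This is well defined because $\boldsymbol{0}\notin\partial K$, and it is clearly continuous. The content of the argument is that $\pi$ is a bijection: for each direction $\boldsymbol{u}\in S^{d-1}$, convexity and compactness of $K$ guarantee that the ray $\{t\boldsymbol{u}:t\geq 0\}$ meets $K$ in a segment $[\boldsymbol{0},t_{\boldsymbol{u}}\boldsymbol{u}]$ with $t_{\boldsymbol{u}}>0$, whose endpoint $t_{\boldsymbol{u}}\boldsymbol{u}$ is the unique point of $\partial K$ on that ray. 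This produces a two-sided inverse of $\pi$.

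Finally, I would invoke the standard topological principle that a continuous bijection from a compact space onto a Hausdorff space is a homeomorphism. Here $\partial K$ is a closed and bounded subset of $\mathbb{R}^d$, hence compact, while $S^{d-1}$ is Hausdorff, so $\pi$ is automatically a homeomorphism. Combining this with the identification $\|\Delta_d^{(d-1)}\|=\partial K$ from the first step yields $\|\Delta_d^{(d-1)}\|\cong S^{d-1}$, as desired. The only genuinely nontrivial step is the bijectivity of $\pi$, which rests entirely on convexity---specifically, that each ray from an interior point crosses the boundary exactly once; everything else (continuity of $\pi$, and the automatic continuity of $\pi^{-1}$) follows from soft point-set topology.
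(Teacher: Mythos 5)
Your proof is correct and coincides with the standard argument behind the paper's treatment of this lemma: the paper gives no proof of its own, citing it as elementary algebraic topology from Matou\v{s}ek \cite[p.~10]{Mat03}, where the same identification of $\|\Delta_d^{(d-1)}\|$ with the boundary $\partial K$ of a geometric $d$-simplex and the same radial projection from an interior point onto $S^{d-1}$ (with the compact-to-Hausdorff principle upgrading the continuous bijection to a homeomorphism) is the intended argument. Your handling of the one substantive point---that convexity forces each ray from an interior point to meet $\partial K$ exactly once---is accurate, so there is nothing to add.
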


	The \textit{join} of two topological spaces  $\mathcal{X}$ and $\mathcal{Y}$, denoted by  $\mathcal{X}*\mathcal{Y}$, is the quotient space $\mathcal{X}\times \mathcal{Y}\times [0,1]/\approx$, where the equivalence relation $\approx$ is given by $(x_1,y,0)\approx(x_2,y,0)$ for all $x_1,x_2\in \mathcal{X}$, $y\in \mathcal{Y}$ and $(x,y_1,1)\approx(x,y_2,1)$ for all $x\in X$, $y_1,y_2\in Y$. As in Section \ref{section::1}, we denote the topological space $\mathcal{X}*\mathcal{X}*\cdots*\mathcal{X}$ ($k$ times) by $\mathcal{X}^{*k}$. 
	
	The following two lemmas respectively determine the structure of the join of two spheres and the structure of the join of polyhedrons of two abstract simplicial complexes.

	\begin{lemma}[{\cite[p. 19, Exercise 18]{Hat02}}]\label{lem::SnSm}
	Let $n$ and $m$ be two non-negative integers. Then 
	\[S^n*S^m\cong S^{n+m+1}.\]
	\end{lemma}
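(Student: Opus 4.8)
The plan is to exhibit an explicit homeomorphism by embedding all three spheres into a common Euclidean space. I would regard $S^n$ as the unit sphere in $\mathbb{R}^{n+1}$, $S^m$ as the unit sphere in $\mathbb{R}^{m+1}$, and $S^{n+m+1}$ as the unit sphere in $\mathbb{R}^{n+1}\times\mathbb{R}^{m+1}=\mathbb{R}^{n+m+2}$. First I would define the map
\[
\Phi:S^n\times S^m\times[0,1]\longrightarrow S^{n+m+1},\qquad \Phi(x,y,t)=\left(\sin\tfrac{\pi t}{2}\cdot x,\ \cos\tfrac{\pi t}{2}\cdot y\right).
\]
Since $\|x\|=\|y\|=1$, the image satisfies $\sin^2\tfrac{\pi t}{2}+\cos^2\tfrac{\pi t}{2}=1$, so $\Phi$ indeed lands in $S^{n+m+1}$, and it is manifestly continuous.

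Next I would verify that $\Phi$ is constant on the equivalence classes of $\approx$, so that it descends to a continuous map $\overline{\Phi}:S^n*S^m\to S^{n+m+1}$ by the universal property of the quotient topology. At $t=0$ the first block vanishes and $\Phi(x,y,0)=(0,y)$ depends only on $y$, matching the relation $(x_1,y,0)\approx(x_2,y,0)$; at $t=1$ the second block vanishes and $\Phi(x,y,1)=(x,0)$ depends only on $x$, matching $(x,y_1,1)\approx(x,y_2,1)$. Hence $\overline{\Phi}$ is well defined.

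I would then argue that $\overline{\Phi}$ is a bijection. For surjectivity, given $(u,v)\in S^{n+m+1}$ with $\|u\|^2+\|v\|^2=1$, I choose $t\in[0,1]$ with $\sin\tfrac{\pi t}{2}=\|u\|$ and $\cos\tfrac{\pi t}{2}=\|v\|$, and set $x=u/\|u\|$, $y=v/\|v\|$ whenever the corresponding norm is nonzero, the missing coordinate being irrelevant precisely when $t\in\{0,1\}$. For injectivity, suppose $\Phi(x_1,y_1,t_1)=\Phi(x_2,y_2,t_2)$; comparing the norms of the two coordinate blocks forces $\sin\tfrac{\pi t_1}{2}=\sin\tfrac{\pi t_2}{2}$ and $\cos\tfrac{\pi t_1}{2}=\cos\tfrac{\pi t_2}{2}$, whence $t_1=t_2=:t$ since $t_i\in[0,1]$. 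If $t\in(0,1)$ both trigonometric factors are nonzero, giving $x_1=x_2$ and $y_1=y_2$; if $t=0$ only $y_1=y_2$ is forced, but then the two triples are $\approx$-equivalent, and symmetrically for $t=1$. Thus $\overline{\Phi}$ is injective.

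Finally, $S^n*S^m$ is compact, being the continuous image of the compact space $S^n\times S^m\times[0,1]$, while $S^{n+m+1}$ is Hausdorff as a subspace of $\mathbb{R}^{n+m+2}$; a continuous bijection from a compact space to a Hausdorff space is automatically a homeomorphism, which completes the argument. The only delicate point I expect is the bookkeeping at the endpoints $t\in\{0,1\}$, where the relation $\approx$ must be invoked to reconcile the collapsed coordinate with both surjectivity and injectivity; away from the endpoints everything is a routine verification.
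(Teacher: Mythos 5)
Your proof is correct and complete. There is no in-paper argument to compare against: the paper imports this lemma as a known result, cited to Hatcher (p.~19, Exercise~18), so your write-up supplies a full proof where the paper uses a black box. Your construction is the standard solution to that exercise. The map $\Phi(x,y,t)=\left(\sin\frac{\pi t}{2}\,x,\ \cos\frac{\pi t}{2}\,y\right)$ lands in $S^{n+m+1}$ by the Pythagorean identity, and it is constant on precisely the $\approx$-classes in the paper's definition of the join: the $S^n$-factor is collapsed at $t=0$ (where $\Phi(x,y,0)=(0,y)$) and the $S^m$-factor at $t=1$ (where $\Phi(x,y,1)=(x,0)$), matching the relations $(x_1,y,0)\approx(x_2,y,0)$ and $(x,y_1,1)\approx(x,y_2,1)$ exactly. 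Your endpoint bookkeeping in the bijectivity check is also handled correctly: $t$ is recovered uniquely from $\|u\|$ and $\|v\|$ because $\pi t/2$ ranges over $[0,\pi/2]$, where sine and cosine are injective, and when $t\in\{0,1\}$ the undetermined coordinate is exactly the one killed by $\approx$. The closing step --- $S^n*S^m$ is compact as the continuous image of $S^n\times S^m\times[0,1]$ under the quotient map, $S^{n+m+1}$ is Hausdorff, and a continuous bijection from a compact space to a Hausdorff space is a homeomorphism --- is the efficient way to get continuity of $\overline{\Phi}^{-1}$ without analyzing the quotient topology by hand. No gaps.
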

	
	\begin{lemma}[{\cite[p. 77, Exercise 3]{Mat03}}]\label{lem::join}
		Let $X$ and $Y$ be two finite abstract simplicial complexes. Then
		\begin{equation*}
			\|X\|*\|Y\|\cong\|X*Y\|.
		\end{equation*}
	\end{lemma}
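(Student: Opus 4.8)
The plan is to exhibit a concrete geometric realization of $X*Y$ inside a Euclidean space in which the join structure is manifest, and then to write down an explicit homeomorphism from the topological join $\|X\|*\|Y\|$ onto its polyhedron. Since $X$ and $Y$ are finite complexes on disjoint vertex sets, I would first fix geometric realizations $\Delta_X$ and $\Delta_Y$ and place them in skew affine position: embed $\Delta_X$ in the hyperplane $\{z=0\}$ of $\mathbb{R}^N\times\mathbb{R}^M\times\mathbb{R}$ using only the first coordinate block, and embed $\Delta_Y$ in $\{z=1\}$ using only the second block. With this placement, for each pair consisting of a face $F$ of $\Delta_X$ and a face $G$ of $\Delta_Y$ (either of which may be empty), the vertices of $F\cup G$ are affinely independent: reading off the $z$-coordinate in any affine dependence forces the $Y$-weights to sum to zero, whence the $X$-weights do too, and the separate affine independence of $F$ in $\Delta_X$ and of $G$ in $\Delta_Y$ finishes the argument.

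Next I would verify that $\Delta := \{\mathrm{conv}(F\cup G): F\in\Delta_X,\ G\in\Delta_Y\}$ is a genuine geometric simplicial complex whose associated abstract complex is $X*Y$. Closure under faces is immediate, and the intersection condition reduces to the identity $\mathrm{conv}(F_1\cup G_1)\cap\mathrm{conv}(F_2\cup G_2)=\mathrm{conv}((F_1\cap F_2)\cup(G_1\cap G_2))$, which in turn follows from the intersection conditions already known for $\Delta_X$ and $\Delta_Y$. Both this identity and the assertion that $\Delta$ realizes $X*Y$ rest on the single key observation that every point $p$ of $\|\Delta\|$ admits a unique decomposition
\[
p=(1-t)\,x+t\,y,\qquad x\in\|X\|,\ y\in\|Y\|,\ t\in[0,1],
\]
in the sense that $t$ is forced as the $z$-coordinate of $p$, and whenever $0<t<1$ the points $x$ and $y$ are then uniquely determined (by dividing the first block by $1-t$ and the second by $t$). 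I would prove this by expanding $p$ in barycentric coordinates over the unique minimal face $\mathrm{conv}(F\cup G)$ containing it and separating the $X$-weights from the $Y$-weights.

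With the realization in hand, I would define $\Phi\colon\|X\|*\|Y\|\to\|X*Y\|$ on representatives by $\Phi([x,y,t])=(1-t)x+t y$. Because $x$ sits at height $z=0$ and $y$ at height $z=1$, this formula is constant in $y$ when $t=0$ and constant in $x$ when $t=1$, so $\Phi$ respects the equivalence relation $\approx$ defining the topological join and descends to a continuous map on the quotient. The unique-decomposition property proved above then shows at once that $\Phi$ is a bijection: surjectivity is the existence of the decomposition, and injectivity is its uniqueness, the degenerate cases $t=0$ and $t=1$ being exactly the identifications already built into $\approx$.

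Finally, to upgrade the continuous bijection to a homeomorphism I would invoke compactness: $\|X\|\times\|Y\|\times[0,1]$ is compact, hence so is its quotient $\|X\|*\|Y\|$, while $\|X*Y\|$ is a subset of Euclidean space and therefore Hausdorff, so a continuous bijection from a compact space onto a Hausdorff space is automatically a homeomorphism. The main obstacle I anticipate is the bookkeeping in the unique-decomposition lemma, in particular handling the boundary values $t\in\{0,1\}$ cleanly so that they match precisely the identifications in the definition of $\approx$, and then confirming the intersection identity that certifies $\Delta$ as a bona fide geometric simplicial complex. Everything else is formal once that lemma is established.
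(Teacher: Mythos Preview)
The paper does not supply a proof of this lemma; it is quoted as an exercise from Matou\v{s}ek's book and invoked as a black box in the proof of Theorem~\ref{thm::main}. Your proposal is correct and is the standard argument one would give for this well-known fact: realize $X*Y$ geometrically by placing $\|X\|$ and $\|Y\|$ in skew affine position inside a common Euclidean space, read off the unique decomposition $p=(1-t)x+ty$ from the extra height coordinate, and use it to build the explicit map $[x,y,t]\mapsto(1-t)x+ty$, finishing with the compact-to-Hausdorff trick. The bookkeeping you flag at $t\in\{0,1\}$ is indeed the only delicate point, and your treatment of it is fine. There is nothing in the paper to compare your argument against.
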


	\section{Key lemmas}\label{section::4}

	The Ger\v{s}gorin circle theorem is a basic tool in matrix theory. Here we use the form that can be derived from its proof.
	
	\begin{theorem}[Ger\v{s}gorin circle theorem, {\cite[p. 388]{HJ12}}]\label{thm::Ger_circle}
		Let $A=(a_{ij})\in\mathbb{C}^{n\times n}$, and let $\lambda\in\mathbb{C}$ be an eigenvalue of $A$. Let $\boldsymbol{x}=(x_1,\ldots,x_n)^T\in \mathbb{C}^n$ be an eigenvector of $A$ with respect to $\lambda$, and let $i\in [n]$ be an index such that $|x_i|=\max_{1\leq j\leq n}|x_j|$. Then  
		\begin{equation*}
			|\lambda-a_{ii}|\leq\sum_{j\neq i}|a_{ij}|.
		\end{equation*}
		Moreover, if the equality holds, then $|x_{j}|=|x_{i}|$ whenever $a_{ij}\neq 0$, and all complex numbers in $\{a_{ij}x_j: j\neq i,a_{ij}\neq 0\}$ have the same argument.
	\end{theorem}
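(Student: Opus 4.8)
The plan is to argue directly from the eigenvalue equation $A\boldsymbol{x}=\lambda\boldsymbol{x}$ by examining its $i$-th coordinate. Writing out the $i$-th row and separating off the diagonal entry gives
\[
(\lambda-a_{ii})x_i=\sum_{j\neq i}a_{ij}x_j.
\]
Since $\boldsymbol{x}$ is an eigenvector it is nonzero, and because $|x_i|=\max_{1\leq j\leq n}|x_j|$ we must have $|x_i|>0$. Taking moduli of both sides, applying the triangle inequality, and then using the bound $|x_j|\leq|x_i|$ for every $j$, I obtain
\[
|\lambda-a_{ii}|\,|x_i|=\Bigl|\sum_{j\neq i}a_{ij}x_j\Bigr|\leq\sum_{j\neq i}|a_{ij}|\,|x_j|\leq\sum_{j\neq i}|a_{ij}|\,|x_i|.
\]
Dividing through by $|x_i|>0$ then yields the desired inequality $|\lambda-a_{ii}|\leq\sum_{j\neq i}|a_{ij}|$.

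For the \emph{moreover} clause, I would simply trace back which of the two inequalities above must become equalities when the conclusion is tight. Equality forces $\sum_{j\neq i}|a_{ij}|\,|x_j|=\sum_{j\neq i}|a_{ij}|\,|x_i|$, and since each summand satisfies $|a_{ij}|\,|x_j|\leq|a_{ij}|\,|x_i|$, every term must match individually; that is, $|x_j|=|x_i|$ whenever $a_{ij}\neq0$. Separately, equality in the triangle inequality $\bigl|\sum_{j\neq i}a_{ij}x_j\bigr|\leq\sum_{j\neq i}|a_{ij}x_j|$ requires that all the nonzero complex numbers $a_{ij}x_j$ point in a common direction, i.e.\ share the same argument. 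Together these give exactly the stated conclusion.

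The argument is entirely elementary and I do not anticipate any genuine obstacle. The only point demanding a little care is the equality analysis, where one must keep the two inequalities — the coordinatewise bound $|x_j|\leq|x_i|$ and the triangle inequality — separate, since they are responsible for the two distinct conclusions (equal moduli and equal arguments, respectively).
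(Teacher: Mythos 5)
Your proof is correct and is precisely the standard argument behind the Ger\v{s}gorin circle theorem that the paper invokes from Horn and Johnson without reproving (the paper explicitly notes it uses ``the form that can be derived from its proof''): isolate the $i$-th coordinate of $A\boldsymbol{x}=\lambda\boldsymbol{x}$, apply the triangle inequality and the maximality of $|x_i|$, then trace equality through both inequalities separately. Your care in attributing the two equality conclusions to the two distinct inequalities (the coordinatewise bound giving $|x_j|=|x_i|$, the triangle inequality giving the common argument) is exactly the refinement the paper needs, so nothing is missing.
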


	Let $A$ be a real symmetric matrix, and let $\lambda_{\min}(A)$ denote the smallest eigenvalue of $A$. By using the Rayleigh quotient theorem, we can deduce the following result.
	
	\begin{lemma}\label{lem::principal}
		Let $A$ be a real symmetric matrix with order $n$, and let $B$ be a principal submatrix of $A$ with order $m$ ($m<n$). Suppose that the columns of $B$ are indexed by  $\mathcal{I}_B\subseteq [n]$.  Then  $\lambda_{\min}(A)=\lambda_{\min}(B)$ if and only if $A$ has an eigenvector $\boldsymbol{x}=(x_1,\ldots,x_n)^T\in \mathbb{R}^n$ with respect to $\lambda_{\min}(A)$ such that $x_i=0$ for all $i\notin \mathcal{I}_B$. 
	\end{lemma}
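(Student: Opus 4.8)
The plan is to prove both directions through the variational characterization of the smallest eigenvalue, transporting Rayleigh quotients between $\mathbb{R}^m$ and $\mathbb{R}^n$ via zero-padding. The one structural input I would record first is this: writing $\widehat{\boldsymbol{z}}\in\mathbb{R}^n$ for the zero extension of a vector $\boldsymbol{z}=(z_j)_{j\in\mathcal{I}_B}\in\mathbb{R}^m$ (so that $(\widehat{\boldsymbol{z}})_j=z_j$ for $j\in\mathcal{I}_B$ and $(\widehat{\boldsymbol{z}})_i=0$ otherwise), the fact that $B$ is exactly the principal submatrix of $A$ on the index set $\mathcal{I}_B$ yields
$$\widehat{\boldsymbol{z}}^{\,T}A\,\widehat{\boldsymbol{z}}=\boldsymbol{z}^TB\boldsymbol{z}\qquad\text{and}\qquad \widehat{\boldsymbol{z}}^{\,T}\widehat{\boldsymbol{z}}=\boldsymbol{z}^T\boldsymbol{z}.$$
Thus $\boldsymbol{z}\mapsto\widehat{\boldsymbol{z}}$ is a bijection between $\mathbb{R}^m$ and the subspace $U:=\{\boldsymbol{y}\in\mathbb{R}^n:y_i=0\text{ for }i\notin\mathcal{I}_B\}$ that preserves both the numerator and the denominator of the Rayleigh quotient.

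Next I would invoke the Rayleigh quotient theorem for the symmetric matrices $A$ and $B$, namely that $\lambda_{\min}(A)=\min_{\boldsymbol{y}\neq\boldsymbol{0}}\boldsymbol{y}^TA\boldsymbol{y}/\boldsymbol{y}^T\boldsymbol{y}$ and $\lambda_{\min}(B)=\min_{\boldsymbol{z}\neq\boldsymbol{0}}\boldsymbol{z}^TB\boldsymbol{z}/\boldsymbol{z}^T\boldsymbol{z}$, together with its equality clause: a nonzero vector attains the minimum if and only if it is an eigenvector for the smallest eigenvalue (obtained by expanding the quotient in an orthonormal eigenbasis of the matrix). Feeding the extensions $\widehat{\boldsymbol{z}}$ into the minimization for $A$ and using the displayed identity already gives $\lambda_{\min}(A)\leq\lambda_{\min}(B)$, which is the baseline inequality that the equivalence will refine.

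For the forward direction I would assume $\lambda_{\min}(A)=\lambda_{\min}(B)$ and take an eigenvector $\boldsymbol{z}$ of $B$ for $\lambda_{\min}(B)$; its extension $\widehat{\boldsymbol{z}}$ is nonzero, lies in $U$, and has Rayleigh quotient equal to $\lambda_{\min}(B)=\lambda_{\min}(A)$, so by the equality clause it is an eigenvector of $A$ for $\lambda_{\min}(A)$ supported on $\mathcal{I}_B$, and I set $\boldsymbol{x}=\widehat{\boldsymbol{z}}$. Conversely, given such an eigenvector $\boldsymbol{x}$ of $A$ vanishing off $\mathcal{I}_B$, its restriction $\boldsymbol{z}$ to $\mathcal{I}_B$ is nonzero with $\widehat{\boldsymbol{z}}=\boldsymbol{x}$, whence the Rayleigh quotient of $B$ at $\boldsymbol{z}$ equals $\lambda_{\min}(A)$; this forces $\lambda_{\min}(B)\leq\lambda_{\min}(A)$, and combined with the baseline inequality gives equality. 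I do not anticipate a serious obstacle: the whole argument is a single transport-of-Rayleigh-quotient computation run in two directions. The only point demanding genuine care is the equality clause of the Rayleigh characterization --- that a vector achieving the minimal value must actually lie in the $\lambda_{\min}$-eigenspace, rather than merely being an approximate minimizer --- since this is precisely what upgrades the interlacing-type inequality $\lambda_{\min}(A)\leq\lambda_{\min}(B)$ into the stated biconditional.
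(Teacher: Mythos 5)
Your proposal is correct and follows essentially the same route as the paper: both arguments transport the Rayleigh quotient between $\mathbb{R}^m$ and the coordinate subspace $\{\boldsymbol{y}\in\mathbb{R}^n : y_i=0 \text{ for } i\notin\mathcal{I}_B\}$ (the paper writes this via $B=U^TAU$ with $U$ a column-deleted identity, which is exactly your zero-padding map) and then invoke the equality clause of the Rayleigh quotient theorem, so that $\lambda_{\min}(B)=\lambda_{\min}(A)$ holds precisely when the minimum over the subspace is attained by a genuine $\lambda_{\min}(A)$-eigenvector of $A$. The only cosmetic difference is that you spell out the two directions separately, while the paper packages both into the single biconditional at the end; the mathematical content is identical.
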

	\begin{proof}
		By the Rayleigh quotient theorem (cf. \cite[Theorem 4.2.2]{HJ12}), 
		\begin{equation*}
			\lambda_{\min}(A)=\min_{\boldsymbol{x}\in \mathbb{R}^n\setminus\{\boldsymbol{0}\}}\frac{\boldsymbol{x}^TA\boldsymbol{x}}{\boldsymbol{x}^T\boldsymbol{x}}, 
		\end{equation*}   
		where the minimum value is achieved if and only if $\boldsymbol{x}$ is an eigenvector of $A$ with respect to $\lambda_{\min}(A)$. Let  $U$ be the matrix obtained from the identity matrix $I_n$ by deleting all columns indexed by $[n]\setminus\mathcal{I}_B$. Then we have $B=U^TAU$, and again by the Rayleigh quotient theorem, 
		\begin{equation*}
			\begin{aligned}
				\lambda_{\min}(B)&=\lambda_{\min}(U^TAU)
				=\min_{\boldsymbol{y}\in \mathbb{R}^m\setminus\{\boldsymbol{0}\}}\frac{\boldsymbol{y}^T(U^TAU)\boldsymbol{y}}{\boldsymbol{y}^T\boldsymbol{y}}
				=\min_{\boldsymbol{y}\in \mathbb{R}^m\setminus\{\boldsymbol{0}\}}\frac{(U\boldsymbol{y})^TAU\boldsymbol{y}}{(U\boldsymbol{y})^TU\boldsymbol{y}}\\
				&=\min_{\boldsymbol{x}\in \mathbb{H}}\frac{\boldsymbol{x}^TA\boldsymbol{x}}{\boldsymbol{x}^T\boldsymbol{x}}
				\geq \min_{x\in \mathbb{R}^n\setminus\{\boldsymbol{0}\}}\frac{\boldsymbol{x}^TA\boldsymbol{x}}{\boldsymbol{x}^T\boldsymbol{x}}=\lambda_{\min}(A),
			\end{aligned}
		\end{equation*}
		where $\mathbb{H}$ is the set of vectors $\boldsymbol{x}\in \mathbb{R}^n\setminus\{\boldsymbol{0}\}$ such that $x_i=0$ for all $i\notin \mathcal{I}_B$. Therefore, we conclude that $\lambda_{\min}(B)=\lambda_{\min}(A)$ if and only if $A$ has an eigenvector $\boldsymbol{x}\in \mathbb{H}$ with respect to $\lambda_{\min}(A)$. The result follows.
	\end{proof}

	Let $X$ be an abstract simplicial complex on vertex set $V$, and let $\sigma\in X$.  The \textit{link} of $\sigma$ in $X$ is defined as
	$$\mathrm{lk}(X,\sigma)=\{\tau\in X:\tau\cup\sigma\in X,\tau\cap\sigma=\varnothing\}.$$ 
	For $U\subseteq V$, we denote by $$X[U]=\{\sigma\in X:\sigma\subseteq U\}$$
	the subcomplex of $X$ induced by $U$. Let $k\geq 0$ and $\sigma\in X(k)$. For any $v\in V\setminus \sigma$, we define
	\[N_{\sigma}(v)=\{\tau\in\sigma(k-1):v\in\mathrm{lk}(X,\tau)\}~\mbox{and}~M_{\sigma}(v)=\{\sigma\setminus\tau:\tau\in N_{\sigma}(v)\}.\] Clearly,  $|N_{\sigma}(v)|=|M_{\sigma}(v)|$.

	In \cite{Lew20}, Lew obtained the following inequality concerning the sum of degrees of simplices in $X$, which is pivotal in the proof of the main result.
	
	\begin{lemma}[{\cite[Lemma 1.4]{Lew20}}]\label{lem::sum_deg_1}
		Let $X$ be a simplicial complex on vertex set $V$ of size $n$, with $h(X)=d$. Let $k\geq 0$ and $\sigma\in X(k)$. Then 
		$$\sum_{\tau\in \sigma(k-1)}\deg_X(\tau)-(k-d+1)\deg_X(\sigma)\leq dn-(d-1)(k+1).$$
	\end{lemma}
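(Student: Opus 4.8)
The plan is to compute the left-hand side by a double-counting argument and then control the only term that can be large using the defining property $h(X)=d$. Write $\sigma=\{v_0,\dots,v_k\}$, so that $\sigma(k-1)=\{\sigma_i:=\sigma\setminus\{v_i\}\mid 0\le i\le k\}$. For each $i$, a vertex $w\notin\sigma_i$ contributes to $\deg_X(\sigma_i)$ exactly when $\sigma_i\cup\{w\}\in X$; separating the case $w=v_i$ (which always contributes, since $\sigma_i\cup\{v_i\}=\sigma\in X$) from the cases $w\in V\setminus\sigma$, one gets $\deg_X(\sigma_i)=1+|\{w\in V\setminus\sigma:\sigma_i\in N_\sigma(w)\}|$. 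Summing over $i$ and interchanging the order of summation yields the identity
\[
\sum_{\tau\in\sigma(k-1)}\deg_X(\tau)=(k+1)+\sum_{v\in V\setminus\sigma}|N_\sigma(v)|.
\]

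Next I would split the sum over $V\setminus\sigma$ according to whether $\sigma\cup\{v\}\in X$. Let $A=\{v\in V\setminus\sigma:\sigma\cup\{v\}\in X\}$, so that $|A|=\deg_X(\sigma)$ and $|V\setminus\sigma|=n-k-1$. If $v\in A$ then $\sigma\cup\{v\}\in X$ forces every face $\sigma_i\cup\{v\}$ into $X$, hence $|N_\sigma(v)|=k+1$. The crux is to bound $|N_\sigma(v)|$ when $v\notin A$, i.e.\ when $\eta:=\sigma\cup\{v\}\notin X$.

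\emph{Key claim:} if $\sigma\cup\{v\}\notin X$, then $|N_\sigma(v)|\le d$. Since $\eta\notin X$, it contains some missing face $\mu$, and $h(X)=d$ gives $|\mu|\le d+1$. As $\sigma\in X$ we must have $v\in\mu$, so $\mu=\mu'\cup\{v\}$ with $\mu'\subseteq\sigma$ and $|\mu'|\le d$. Now $\mu\subseteq\sigma_i\cup\{v\}$ precisely when $v_i\notin\mu'$; in that case $\sigma_i\cup\{v\}$ contains the non-face $\mu$ and so $\sigma_i\cup\{v\}\notin X$. Therefore $\sigma_i\in N_\sigma(v)$ can occur only for indices $i$ with $v_i\in\mu'$, of which there are at most $|\mu'|\le d$. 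This is the step I expect to be the heart of the argument, since it is exactly where the hypothesis on the maximal dimension of a missing face is used.

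Finally I would assemble the pieces. Using the identity together with $|N_\sigma(v)|=k+1$ on $A$ and $|N_\sigma(v)|\le d$ off $A$,
\[
\sum_{\tau\in\sigma(k-1)}\deg_X(\tau)\le(k+1)+(k+1)\deg_X(\sigma)+d\bigl(n-k-1-\deg_X(\sigma)\bigr).
\]
Subtracting $(k-d+1)\deg_X(\sigma)$ and collecting the coefficient of $\deg_X(\sigma)$, namely $(k+1)-d-(k-d+1)=0$, the degree term cancels and the right-hand side simplifies to $(k+1)+d(n-k-1)=dn-(d-1)(k+1)$, which is exactly the asserted bound.
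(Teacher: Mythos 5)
Your proof is correct and follows essentially the same route the paper takes: your double-counting identity, after splitting over $A=\{v\in V\setminus\sigma:\sigma\cup\{v\}\in X\}$ (which is exactly $\{v\in V\setminus\sigma: v\in\mathrm{lk}(X,\sigma)\}$, with $|N_\sigma(v)|=k+1$ there), is precisely Lemma~\ref{lem::sum_deg_2}, your key claim is the bound $|N_\sigma(v)|\le d$ of Lemma~\ref{lem::count_d}, and the closing arithmetic (the coefficient of $\deg_X(\sigma)$ cancelling to $0$) is identical. The only divergence is in how the key claim is proved: you argue directly, locating a missing face $\mu\subseteq\sigma\cup\{v\}$ with $v\in\mu$ and $|\mu\setminus\{v\}|\le d$ and noting that $\sigma_i\in N_\sigma(v)$ forces $v_i\in\mu\setminus\{v\}$, whereas the paper's Lemma~\ref{lem::count_d} proceeds by contradiction (if $|M_\sigma(v)|>d$, every $(d+1)$-subset of $\{v\}\cup\sigma$ lies in $X$, and iterating upward forces $\{v\}\cup\sigma\in X$); both hinge on $h(X)=d$ in the same way, and your direct version is a fine, slightly shorter substitute for the inequality half of that lemma (though it does not yield the equality analysis the paper also needs).
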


	In order to prove Lemma \ref{lem::sum_deg_1}, Lew \cite{Lew20} established the following equality.
	
	\begin{lemma}[{\cite[Claim 3.1]{Lew20}}]\label{lem::sum_deg_2}
		Let $X$ be a simplicial complex on vertex set $V$. Let $k\geq 0$ and $\sigma\in X(k)$. Then
		$$\sum_{\tau\in \sigma(k-1)}\deg_X(\tau)=(k+1)(\deg_X(\sigma)+1)+\sum_{v\in V\setminus \sigma,
			v\notin \mathrm{lk}(X,\sigma)}|N_{\sigma}(v)|.$$
	\end{lemma}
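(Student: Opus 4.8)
The plan is to prove the identity by double counting the incidences $(\tau,u)$ in which $\tau\in\sigma(k-1)$ and $u$ is a vertex such that $\tau\cup\{u\}\in X(k)$. Indeed, $\deg_X(\tau)$ counts the $k$-simplices containing $\tau$, and each such simplex arises by adjoining a single vertex $u\notin\tau$ with $\tau\cup\{u\}\in X$; hence $\sum_{\tau\in\sigma(k-1)}\deg_X(\tau)$ is exactly the number of these incidences. The whole argument is elementary once the incidences are organized correctly.

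First I would note that $\sigma$ has precisely $k+1$ faces of dimension $k-1$, namely $\tau_w:=\sigma\setminus\{w\}$ for $w\in\sigma$, and that for such a face the complement $V\setminus\tau_w$ consists of $w$ together with the vertices of $V\setminus\sigma$. Consequently, the vertices $u$ extending $\tau_w$ to a $k$-simplex are either $u=w$ or $u\in V\setminus\sigma$. The choice $u=w$ always succeeds because $\tau_w\cup\{w\}=\sigma\in X$, contributing $1$ to each of the $k+1$ terms $\deg_X(\tau_w)$, hence $k+1$ in total.

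Next I would reverse the order of summation in the remaining case. For a fixed $u\in V\setminus\sigma$, the number of $w\in\sigma$ with $(\sigma\setminus\{w\})\cup\{u\}\in X$ is exactly $|N_\sigma(u)|$, directly from the definition of $N_\sigma(u)$. This gives
\[
\sum_{\tau\in\sigma(k-1)}\deg_X(\tau)=(k+1)+\sum_{u\in V\setminus\sigma}|N_\sigma(u)|.
\]

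Finally I would split $V\setminus\sigma$ according to whether $u\in\mathrm{lk}(X,\sigma)$. The key observation is that if $u\in\mathrm{lk}(X,\sigma)$, then $\sigma\cup\{u\}\in X$, so by the downward closure of $X$ every face $(\sigma\setminus\{w\})\cup\{u\}$ of $\sigma\cup\{u\}$ also lies in $X$; thus $N_\sigma(u)=\sigma(k-1)$ and $|N_\sigma(u)|=k+1$. Since there are exactly $\deg_X(\sigma)$ vertices $u\in\mathrm{lk}(X,\sigma)$, the contribution of the link is $(k+1)\deg_X(\sigma)$, and separating out the remaining vertices $v\in V\setminus\sigma$ with $v\notin\mathrm{lk}(X,\sigma)$ yields
\[
\sum_{\tau\in\sigma(k-1)}\deg_X(\tau)=(k+1)+(k+1)\deg_X(\sigma)+\sum_{\substack{v\in V\setminus\sigma\\ v\notin\mathrm{lk}(X,\sigma)}}|N_\sigma(v)|,
\]
which is the asserted formula. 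I do not anticipate any serious obstacle; the only delicate points are the bookkeeping of which vertices can extend a given $(k-1)$-face (only $w$ itself or vertices outside $\sigma$) and the downward-closure step that pins down $|N_\sigma(u)|=k+1$ precisely on the link.
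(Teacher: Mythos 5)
Your proof is correct: the double count of incidences $(\tau,u)$ with $\tau\in\sigma(k-1)$ and $\tau\cup\{u\}\in X(k)$, split according to $u\in\sigma$ (giving $k+1$), $u\in\mathrm{lk}(X,\sigma)$ (giving $(k+1)\deg_X(\sigma)$, using downward closure to get $|N_\sigma(u)|=k+1$ there), and the remaining vertices (giving $\sum_{v\notin\mathrm{lk}(X,\sigma)}|N_\sigma(v)|$), is exactly right, including the bookkeeping that $V\setminus\tau_w=\{w\}\cup(V\setminus\sigma)$. The paper itself offers no proof --- it imports the statement as Claim 3.1 of Lew's paper --- and your argument is essentially the same counting argument as Lew's original one, so there is nothing further to reconcile.
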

	
	In the proof of Lemma \ref{lem::sum_deg_1}, Lew \cite{Lew20} also  established an upper bound for the cardinality of $N_\sigma(v)$ (or $M_\sigma(v)$) under the assumption that $v\notin \mathrm{lk}(X,\sigma)$. Here, we present an alternative proof and further determine the conditions under which this upper bound is achieved.
	
	\begin{lemma}\label{lem::count_d}
		Let $X$ be a simplicial complex on vertex set $V$, with $h(X)=d$. Let $\sigma\in X(k)$ and $v\in V\setminus \sigma$. If $v\notin \mathrm{lk}(X,\sigma)$, then
		\[|N_{\sigma}(v)|=|M_{\sigma}(v)|\leq d.\]
		Furthermore, if the equality holds, then 
		\[X[\{v\}\cup M_{\sigma}(v)]\cong \Delta_d^{(d-1)}~\mbox{and}~X[\{v\}\cup \sigma]\cong \Delta_d^{(d-1)}*\Delta_{k-d}.\]
	\end{lemma}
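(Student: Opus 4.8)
The plan is to recast the whole statement as an assertion about the minimal non-faces of a single auxiliary complex on the vertex set $\sigma$. Writing $W=M_{\sigma}(v)$ (identifying each one-element set $\sigma\setminus\tau$ with its vertex), I would consider the family
\[\mathcal{F}=\{U\subseteq\sigma:U\cup\{v\}\in X\}.\]
Since $X$ is closed under inclusion, $\mathcal{F}$ is a simplicial complex on the vertex set $\sigma$; because $v\notin\mathrm{lk}(X,\sigma)$ we have $\sigma\notin\mathcal{F}$; and by the definitions of $N_{\sigma}(v)$ and $M_{\sigma}(v)$, a codimension-one face $\sigma\setminus\{w\}$ lies in $\mathcal{F}$ exactly when $w\in W$. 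Thus $W$ is precisely the set of vertices whose deletion from $\sigma$ lands inside $\mathcal{F}$, and the lemma becomes a statement about how these vertices sit relative to the minimal non-faces of $\mathcal{F}$.

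The next step is the bridge between the two complexes, from which the inequality falls out. If $R\subseteq\sigma$ is a minimal non-face of $\mathcal{F}$, then $R\cup\{v\}$ is a missing face of $X$: it is not in $X$ because $R\notin\mathcal{F}$, while every proper subset either omits $v$ (hence lies in $\sigma\in X$) or has the form $\{v\}\cup P'$ with $P'\subsetneq R$, so $P'\in\mathcal{F}$ by minimality of $R$ and thus $P'\cup\{v\}\in X$. As $h(X)=d$, this forces $|R\cup\{v\}|\le d+1$, i.e.\ $|R|\le d$. Moreover $W\subseteq R$ for every such $R$, since if some $w\in W$ were absent from $R$ we would have $R\subseteq\sigma\setminus\{w\}\in\mathcal{F}$, and downward closure would give $R\in\mathcal{F}$, a contradiction. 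Combining these, $|M_{\sigma}(v)|=|W|\le|R|\le d$.

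For the equality case $|W|=d$, the containment $W\subseteq R$ together with $|R|\le d$ forces $R=W$, so that $W\cup\{v\}$ is itself a missing face of $X$ of dimension $d$. Its proper subsets are exactly the faces of $X[\{v\}\cup W]$, which is therefore the $(d-1)$-skeleton on the $d+1$ vertices $\{v\}\cup W$, yielding $X[\{v\}\cup M_{\sigma}(v)]\cong\Delta_d^{(d-1)}$. For the second isomorphism I would set $A=\{v\}\cup W$ and $B=\sigma\setminus W$, so $\{v\}\cup\sigma=A\sqcup B$ with $|A|=d+1$ and $|B|=k-d+1$, and verify that for $F\subseteq A\cup B$ one has $F\in X$ iff $A\not\subseteq F$. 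If $A\subseteq F$ then $F\notin X$ by downward closure, since $A\notin X$; conversely, if $A\not\subseteq F$ and $v\notin F$ then $F\subseteq\sigma\in X$, while if $v\in F$ some $w\in W$ is missing, so $F\subseteq(\sigma\setminus\{w\})\cup\{v\}\in X$ by the defining property of $W$. This is exactly the join description $X[\{v\}\cup\sigma]\cong\Delta_d^{(d-1)}*\Delta_{k-d}$, with the skeleton supported on $A$ and the full simplex on $B$.

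The genuinely delicate point is the first one, namely finding the right reformulation: once the auxiliary complex $\mathcal{F}$ and the correspondence ``minimal non-face of $\mathcal{F}$ $\leftrightarrow$ missing face of $X$ through $v$'' are in place, every remaining step is a short deduction from downward closure and the definition of $M_{\sigma}(v)$. The hypothesis $h(X)=d$ enters solely to cap the size of missing faces, which is precisely what converts the correspondence into the bound $d$ and pins down the extremal structure.
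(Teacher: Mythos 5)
Your proof is correct, and it takes a genuinely different route from the paper's. The paper argues by contradiction and bootstrapping: assuming $|M_{\sigma}(v)|>d$, it shows every $(d+1)$-subset of $\{v\}\cup\sigma$ is a face (it either lies in $\sigma$ or omits some element of $M_{\sigma}(v)$, hence lies in $\{v\}\cup\tau$ for some $\tau\in N_{\sigma}(v)$), and then, since $h(X)=d$ forbids missing faces of dimension greater than $d$, fills in all $(d+2)$-subsets, $(d+3)$-subsets, and so on, until $\{v\}\cup\sigma\in X$, contradicting $v\notin\mathrm{lk}(X,\sigma)$; the equality case reruns the same bootstrap to show $\{v\}\cup M_{\sigma}(v)\notin X$ and that every subset not containing it is a face. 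Your reformulation via the auxiliary complex $\mathcal{F}=\{U\subseteq\sigma: U\cup\{v\}\in X\}$ (the restriction of the link of $v$ to $\sigma$) replaces the bootstrap by a single application of the hypothesis: a minimal non-face $R$ of $\mathcal{F}$ lifts to a missing face $R\cup\{v\}$ of $X$, so $|R|\le d$, while $M_{\sigma}(v)\subseteq R$ by downward closure, giving $|M_{\sigma}(v)|\le d$ directly rather than by contradiction and with no case split on whether $d>k+1$ (which the paper disposes of separately). In the equality case your argument immediately pins down $R=M_{\sigma}(v)$ as the unique minimal non-face, so $\{v\}\cup M_{\sigma}(v)$ is a missing face without any further bootstrap, and the two isomorphisms then follow from the same characterization the paper ultimately reaches, namely $F\in X$ iff $\{v\}\cup M_{\sigma}(v)\not\subseteq F$. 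One point worth making explicit in your write-up: the existence of a minimal non-face $R$ rests on $\sigma\notin\mathcal{F}$ (which you did note) together with finiteness; as stated, your chain $|M_{\sigma}(v)|\le|R|\le d$ quantifies over an $R$ whose existence is only implicit. What your approach buys is that $h(X)=d$ is invoked exactly once, via the correspondence between missing faces of $X$ through $v$ and minimal non-faces of $\mathcal{F}$, which makes the extremal analysis essentially automatic; what the paper's buys is a self-contained argument on raw subsets with no auxiliary object.
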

	\begin{proof}
		Since $|N_\sigma(v)|=|M_\sigma(v)|\leq |\sigma(k-1)|=k+1$, there is nothing to prove if $d>k+1$. Thus we can suppose  $d\leq k+1$. By contradiction, assume that $|M_{\sigma}(v)|>d$. Let $W=\{w_1,w_2,\ldots,w_{d+1}\}$ be a $(d+1)$-subset of $\{v\}\cup \sigma$.  If $W\subseteq\sigma$, then $W\in X$. If $W\not\subseteq\sigma$, then $v\in W$, and $W$ would miss some element in $M_{\sigma}(v)$ due to $|M_\sigma(v)|\geq d+1>d=|W\cap \sigma|$. Hence, there exists some  $\tau\in  N_{\sigma}(v)$ such that $W\subseteq \{v\}\cup \tau$. As $v\in \mathrm{lk}(X,\tau)$, we have $\{v\}\cup\tau\in X$, which implies that  $W\in X$. Hence, every $(d+1)$-subset of $\{v\}\cup \sigma$ is a simplex in $X$. Then we see that every $(d+2)$-subset of $\{v\}\cup \sigma$ must belong to $X$, since otherwise it would be a missing face of dimension $d+1$, contrary to $h(X)=d$. Continuing this way, we assert that every $\ell$-subset of $\{v\}\cup\sigma$ with $\ell\geq d+1$ is a simplex in $X$. In particular, $\{v\}\cup\sigma\in X$. However, this is impossible because $v\notin \mathrm{lk}(X,\sigma)$. Therefore, we conclude that $|N_{\sigma}(v)|=|M_{\sigma}(v)|\leq d$. This proves the first part of the lemma. 
		
		Now assume that $|N_\sigma(v)|=|M_{\sigma}(v)|=d$. Clearly, $d\leq k+1$. Let $W$ be a subset of $\{v\}\cup \sigma$. If $W\not\supseteq \{v\}\cup M_{\sigma}(v)$, then  $W\subseteq \sigma$ or $W$ misses some element in $M_\sigma(v)$. In both cases, as above, we can deduce that $W\in X$. Hence, every subset of $\{v\}\cup \sigma$ that does not contain $\{v\}\cup M_{\sigma}(v)$ is a simplex in  $X$. Furthermore, we claim that $\{v\}\cup M_{\sigma}(v)\notin X$. If not, then all $(d+1)$-subsets of $\{v\}\cup \sigma$ would belong to $X$ because $|\{v\}\cup M_{\sigma}(v)|=d+1$. In this case, as above, we can deduce that $\{v\}\cup \sigma\in X$, contrary to $v\notin \mathrm{lk}(X,\sigma)$. Also, from $\{v\}\cup M_{\sigma}(v)\notin X$ we  immediately obtain that $W\notin X$ whenever $W\supseteq \{v\}\cup M_{\sigma}(v)$. Therefore, we conclude that $W\notin X$ if and only if $W\supseteq \{v\}\cup M_{\sigma}(v)$. Consequently, we have $X[\{v\}\cup M_{\sigma}(v)]\cong \Delta_d^{(d-1)}$ and $X[\{v\}\cup \sigma]\cong \Delta_d^{(d-1)}*\Delta_{k-d}$, as desired.
		
		We complete the proof.
	\end{proof}   
	
	\section{Proof of Theorem \ref{thm::main}}\label{section::5}	
	Now we are in a position to give the proof of Theorem \ref{thm::main}.
	\renewcommand\proofname{\it{Proof of Theorem \ref{thm::main}}} 
	\begin{proof}
		Let $L_k$ be the matrix representation of the reduced $k$-dimensional Laplacian of $X$, as described in Lemma \ref{lem::Matrix_Rep}. Furthermore, let   $\boldsymbol{x}=(x_\sigma)_{\sigma\in X(k)}\in \mathbb{R}^{|X(k)|}$ be an eigenvector of $L_k$ corresponding to the $k$-th spectral gap $\mu_k(X)=(d+1)(k+1)-dn$. Without loss of generality, we may assume that $\max_{\sigma\in X(k)}|x_\sigma|=1$. Let $\sigma_0\in X(k)$ be such that $|x_{\sigma_0}|=1$. By Theorem \ref{thm::Ger_circle}, Lemma \ref{lem::Matrix_Rep}, Lemma \ref{lem::sum_deg_2} and Lemma \ref{lem::count_d}, we have
		\begin{align*}
			\mu_k(X)&\geq L_k(\sigma_0,\sigma_0)-\sum_{\sigma\in X(k),\sigma\neq \sigma_0}|L_k(\sigma_0,\sigma)|
			\\	
			&= \deg_X(\sigma_0)+k+1-|\{\sigma\in X(k):|\sigma_0\cap \sigma|=k,\sigma_0\cup\sigma\notin X(k+1)\}|
			\\
			&= \deg_X(\sigma_0)+k+1-\sum_{\tau\in \sigma_0(k-1)}|\{v\in V\setminus \sigma_0:v\in\mathrm{lk}(X,\tau),v\notin \mathrm{lk}(X,\sigma_0)\}|
			\\
				&= \deg_X(\sigma_0)+k+1-\sum_{\tau\in \sigma_0(k-1)}(\deg_X(\tau)-1-\deg_X(\sigma_0))\\
			&=(k+2)\deg_X(\sigma_0)+2(k+1)-\sum_{\tau\in \sigma_0(k-1)}\deg_X(\tau)\\
			&=(k+2)\deg_X(\sigma_0)+2(k+1)-\left((k+1)(\deg_X(\sigma_0)+1)+\sum_{v\in V\setminus \sigma_0,
				v\notin \mathrm{lk}(X,\sigma_0)}|N_{\sigma_0}(v)|\right)\\
			&=\deg_X(\sigma_0)+k+1-\sum_{v\in V\setminus \sigma_0,
				v\notin \mathrm{lk}(X,\sigma_0)}|N_{\sigma_0}(v)|\\
			&\geq \deg_X(\sigma_0)+k+1-\sum_{v\in V\setminus \sigma_0,
				v\notin \mathrm{lk}(X,\sigma_0)}d\\
			&= \deg_X(\sigma_0)+k+1-(n-k-1-\deg_X(\sigma_0))d\\	&=(d+1)(\deg_X(\sigma_0)+k+1)-dn\\
			&\geq (d+1)(k+1)-dn.
		\end{align*}
Combining this inequality with the assumption that  $\mu_k(X)=(d+1)(k+1)-dn$, we obtain the following three facts: 
		\begin{enumerate}[(F1)]
			\item $\mu_k(X)=L_k(\sigma_0,\sigma_0)-\sum_{\sigma\in X(k),\sigma\neq \sigma_0}|L_k(\sigma_0,\sigma)|$;
			\item $\deg_X(\sigma_0)=0$, i.e., $v\notin \mathrm{lk}(X,\sigma_0)$ for every $v\in V\setminus \sigma_0$;
			\item $|N_{\sigma_0}(v)|=|M_{\sigma_0}(v)|=d$ for every $v\in V\setminus \sigma_0$.
		\end{enumerate}
		Using the facts (F1)--(F3), we proceed to deduce the following three claims. 
		
		\begin{claim}\label{claim::1}
			For every $v\in V\setminus \sigma_0$, we have  
			\[X[\{v\}\cup M_{\sigma_0}(v)]\cong \Delta_d^{(d-1)}~\mbox{and}~X[\{v\}\cup V(\sigma_0)]\cong \Delta_d^{(d-1)}*\Delta_{k-d}.\] 
		\end{claim}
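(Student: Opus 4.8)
The plan is to recognize that Claim~\ref{claim::1} is an immediate consequence of the equality case recorded in Lemma~\ref{lem::count_d}; essentially no new work is required beyond invoking the facts (F2) and (F3) already extracted from the Ger\v{s}gorin estimate. First I would fix an arbitrary vertex $v\in V\setminus\sigma_0$ and verify that the hypotheses of Lemma~\ref{lem::count_d}, taken with $\sigma=\sigma_0$, are met. By (F2) we have $v\notin\mathrm{lk}(X,\sigma_0)$, which is precisely the standing assumption of that lemma, and by (F3) we have $|N_{\sigma_0}(v)|=|M_{\sigma_0}(v)|=d$, so the inequality $|N_{\sigma_0}(v)|\leq d$ furnished by the lemma is attained with equality.

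Consequently, the ``Furthermore'' conclusion of Lemma~\ref{lem::count_d} applies verbatim and yields
\[
X[\{v\}\cup M_{\sigma_0}(v)]\cong\Delta_d^{(d-1)}\quad\text{and}\quad X[\{v\}\cup\sigma_0]\cong\Delta_d^{(d-1)}*\Delta_{k-d}.
\]
Since in the abstract setting a simplex coincides with its own vertex set, we have $V(\sigma_0)=\sigma_0$, so the second isomorphism reads exactly $X[\{v\}\cup V(\sigma_0)]\cong\Delta_d^{(d-1)}*\Delta_{k-d}$, as asserted. Because $v$ was arbitrary, both isomorphisms hold for every $v\in V\setminus\sigma_0$, completing the claim.

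I do not anticipate any genuine obstacle at this stage: the substantive content has been front-loaded into the Ger\v{s}gorin chain of inequalities, which pins down $\deg_X(\sigma_0)=0$ and forces each local count $|N_{\sigma_0}(v)|$ to reach its maximum $d$, together with the structural analysis of Lemma~\ref{lem::count_d} that identifies the extremal local configuration. The only point meriting a word of care is the notational identification $V(\sigma_0)=\sigma_0$, after which Claim~\ref{claim::1} is a direct application of the lemma, carried out pointwise over $v$.
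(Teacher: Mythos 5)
Your proposal is correct and follows exactly the paper's own (one-line) proof: apply the equality case of Lemma~\ref{lem::count_d} with $\sigma=\sigma_0$, whose hypotheses are supplied by (F2) and (F3). Your added remark that $V(\sigma_0)=\sigma_0$ in the abstract setting is a fair notational clarification but introduces nothing beyond the paper's argument.
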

		\renewcommand\proofname{\it{Proof of Claim \ref{claim::1}}} 
		\begin{proof}
			The result follows from Lemma \ref{lem::count_d}, (F2) and (F3) immediately.
		\end{proof}
		\begin{claim}\label{claim::2}
			Let $\sigma,\eta\in X(k)$ be such that $|\sigma\cap\sigma_0|=|\eta\cap\sigma_0|=|\sigma\cap\eta|=k$. Then
			\[	(\sigma_0:\sigma_0\cap\sigma)\cdot(\sigma:\sigma_0\cap\sigma)\cdot (\sigma_0:\sigma_0\cap\eta)\cdot(\eta:\sigma_0\cap\eta)\cdot(\sigma:\sigma\cap\eta)\cdot(\eta:\sigma\cap\eta)=-1.
			\]
		\end{claim}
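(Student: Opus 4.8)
The plan is to recognize the six signs in the statement as three off-diagonal entries of the Laplacian matrix $L_k$, and then to read the required identity directly off the sign pattern of the eigenvector $\boldsymbol{x}$. By (F2) we have $\deg_X(\sigma_0)=0$, so $\sigma_0$ is contained in no $(k+1)$-simplex; consequently every $\sigma\in X(k)$ with $|\sigma\cap\sigma_0|=k$ satisfies $\sigma_0\sim\sigma$, and Lemma \ref{lem::Matrix_Rep} gives $L_k(\sigma_0,\sigma)=(\sigma_0:\sigma_0\cap\sigma)(\sigma:\sigma_0\cap\sigma)$, and likewise for $\eta$. Once I also establish $\deg_X(\sigma)=0$ below, Lemma \ref{lem::Matrix_Rep} applies to the pair $\sigma,\eta$ as well, so that the product in the statement equals $L_k(\sigma_0,\sigma)\,L_k(\sigma_0,\eta)\,L_k(\sigma,\eta)$. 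It therefore suffices to prove that this product of three $\pm1$ entries equals $-1$.

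First I would normalize $\boldsymbol{x}$ (replacing it by $-\boldsymbol{x}$ if necessary) so that $x_{\sigma_0}=1$. Since (F1) asserts that the Ger\v{s}gorin bound of Theorem \ref{thm::Ger_circle} is tight at the index $\sigma_0$, the equality case of that theorem applies: reading the eigenvalue equation in row $\sigma_0$ together with the identity $\mu_k(X)-L_k(\sigma_0,\sigma_0)=\mu_k(X)-(k+1)=-d(n-k-1)<0$, I find that all numbers $L_k(\sigma_0,\sigma)x_\sigma$ (ranging over neighbours $\sigma$ of $\sigma_0$) carry the same, negative, sign. As each has modulus $1$, this forces $x_\sigma=-L_k(\sigma_0,\sigma)$ and $|x_\sigma|=1$ for every $\sigma$ with $|\sigma\cap\sigma_0|=k$; in particular $x_\sigma=-L_k(\sigma_0,\sigma)$ and $x_\eta=-L_k(\sigma_0,\eta)$.

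Next, because $|x_\sigma|=1$ is again a coordinate of maximal modulus, I would rerun the entire inequality chain preceding (F1)--(F3) with $\sigma$ in place of $\sigma_0$. The value $\mu_k(X)$ is unchanged, so every inequality collapses to equality, yielding both $\deg_X(\sigma)=0$ and the Ger\v{s}gorin equality at $\sigma$. Repeating the sign analysis of the previous paragraph at the index $\sigma$ gives $x_\tau=-x_\sigma\,L_k(\sigma,\tau)$ for every neighbour $\tau$ of $\sigma$; choosing $\tau=\eta$ (legitimate since $|\sigma\cap\eta|=k$ and $\deg_X(\sigma)=0$ force $\sigma\sim\eta$) produces $x_\eta=-x_\sigma\,L_k(\sigma,\eta)$. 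Substituting $x_\sigma=-L_k(\sigma_0,\sigma)$ and $x_\eta=-L_k(\sigma_0,\eta)$ and using that all entries are $\pm1$, I obtain $L_k(\sigma_0,\sigma)\,L_k(\sigma_0,\eta)\,L_k(\sigma,\eta)=-1$, which is precisely the claim after expanding the three entries via Lemma \ref{lem::Matrix_Rep}.

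The step I expect to be most delicate is the rerun at the index $\sigma$: one must verify that the derivation of (F1)--(F3) used no feature of $\sigma_0$ beyond its being an eigenvector coordinate of maximal modulus, so that it applies verbatim to $\sigma$ and delivers $\deg_X(\sigma)=0$; this degree vanishing is exactly what identifies $L_k(\sigma,\eta)$ with the pure sign $(\sigma:\sigma\cap\eta)(\eta:\sigma\cap\eta)$ and what legitimizes the Ger\v{s}gorin equality at $\sigma$. A second subtle point is the bookkeeping of global signs in the equality case of Theorem \ref{thm::Ger_circle}, whose ``equal argument'' conclusion only fixes the signs up to a common factor; this factor must be pinned down through the eigenvalue equation and the strict inequality $\mu_k(X)-(k+1)<0$. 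It is worth stressing that a purely combinatorial evaluation of the product would not suffice: were $\sigma_0,\sigma,\eta$ to share a common $(k-1)$-face, the six signs would cancel in pairs to give $+1$, so the real content of the claim is that the eigenvector structure forbids this configuration, and indeed assuming it would force the contradiction $1=-1$ in the computation above.
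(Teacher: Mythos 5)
Your proof is correct, but it reaches the key sign relation by a genuinely different mechanism than the paper. Both arguments begin identically: the Ger\v{s}gorin equality case (Theorem \ref{thm::Ger_circle}) at $\sigma_0$, via (F1) and (F2), gives $|x_\sigma|=1$ and $x_\sigma=\pm L_k(\sigma_0,\sigma)$ with a common sign for all $\sigma\sim\sigma_0$ (the paper carries the two signs as symmetric cases (i)/(ii), while you pin the sign to be negative by normalizing $x_{\sigma_0}=1$ and reading the row equation against $\mu_k(X)-(k+1)=-d(n-k-1)<0$ --- a small simplification, though unnecessary, since the sign cancels in the final product either way). The divergence is in how one gets $\deg_X(\sigma)=\deg_X(\eta)=0$ and the relation $x_\sigma\cdot(\sigma:\sigma\cap\eta)=-x_\eta\cdot(\eta:\sigma\cap\eta)$: the paper invokes the factorization $L_k=D_k+H_kH_k^T$ of Lemma \ref{lem::L_k} together with the diagonal bound \eqref{equ::diag1} (which rests on Lemma \ref{lem::sum_deg_1}), concluding globally that $H_k^T\boldsymbol{x}=0$ and that $D_k(\sigma,\sigma)=\mu_k(X)$, hence $\deg_X(\sigma)=0$, wherever $x_\sigma\neq 0$; the anti-symmetry relation is then the $\{\sigma,\eta\}$-coordinate of $H_k^T\boldsymbol{x}=0$. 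You instead bootstrap: since $|x_\sigma|=1$ is again a coordinate of maximal modulus, you rerun the extremal chain at $\sigma$ --- which, as you correctly note, uses nothing about $\sigma_0$ beyond maximal modulus, only Theorem \ref{thm::Ger_circle} and Lemmas \ref{lem::sum_deg_2} and \ref{lem::count_d} --- forcing $\deg_X(\sigma)=0$ and Ger\v{s}gorin equality at $\sigma$, whence $x_\eta=-x_\sigma L_k(\sigma,\eta)$ (your uniform formula is right: the common sign of the terms $L_k(\sigma,\tau)x_\tau$ flips with the sign of $x_\sigma$, so the relation holds in both cases). Substitution then yields $L_k(\sigma_0,\sigma)L_k(\sigma_0,\eta)L_k(\sigma,\eta)=-1$, exactly the claim. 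What each approach buys: the paper's $H_k^T\boldsymbol{x}=0$ is a single global statement valid for \emph{every} adjacent pair supported on $\boldsymbol{x}$, whereas your local argument covers only pairs adjacent to $\sigma_0$ --- but that is all Claim \ref{claim::2} requires, since $\deg_X(\sigma_0)=0$ forces $\sigma\sim\sigma_0$ and $\eta\sim\sigma_0$ automatically; in exchange, your route is more self-contained, dispensing entirely with Lemma \ref{lem::L_k} and Lemma \ref{lem::sum_deg_1}, and your closing observation (that three simplices pairwise intersecting in the same $(k-1)$-face would make the six-sign product $+1$) correctly anticipates how the claim is deployed in Claim 3.
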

		\renewcommand\proofname{\it{Proof of Claim \ref{claim::2}}} 
		\begin{proof}
			By Theorem \ref{thm::Ger_circle} and (F1), we see that  $|x_{\sigma}|=|x_{\sigma_0}|=1$ for all $\sigma\in X(k)\setminus \{\sigma_0\}$ satisfying $L_k(\sigma_0,\sigma)\neq 0$, i.e., $\sigma\sim \sigma_0$, and the numbers in the set $\{L_k(\sigma_0,\sigma)x_\sigma: \sigma\in X(k)\setminus \{\sigma_0\},\sigma\sim \sigma_0\}$ are either all positive or all negative.
			Recall that $L_k(\sigma_0,\sigma)=(\sigma_0:\sigma_0\cap\sigma)\cdot(\sigma:\sigma_0\cap\sigma)\in \{1,-1\}$ whenever $\sigma\sim \sigma_0$. 
			Thus there are only two possibilities: 
			\begin{enumerate}[(i)]
				\item $x_{\sigma}=L_k(\sigma_0,\sigma)=(\sigma_0:\sigma_0\cap\sigma)\cdot(\sigma:\sigma_0\cap\sigma)$ for all $\sigma\in X(k)\setminus \{\sigma_0\}$ with $\sigma\sim\sigma_0$;
				\item $x_{\sigma}=-L_k(\sigma_0,\sigma)=-(\sigma_0:\sigma_0\cap\sigma)\cdot(\sigma:\sigma_0\cap\sigma)$ for all $\sigma\in X(k)\setminus \{\sigma_0\}$ with $\sigma\sim\sigma_0$.
			\end{enumerate}

			In what follows, we only prove the result for (i), since the proof for (ii) is very similar. By Lemma \ref{lem::sum_deg_1} and \eqref{equ::diag}, for any $\sigma\in X(k)$, we have
		\begin{equation}\label{equ::diag1}
				\begin{aligned}
					D_k(\sigma,\sigma)&=2(k+1)+(k+2)\deg_X(\sigma)-\sum_{\tau\in \sigma(k-1)}\deg_X(\tau)\\
					&\geq (d+1)(\deg_X(\sigma)+k+1)-dn\\
					&\geq (d+1)(k+1)-dn=\mu_k(X).
				\end{aligned} 
			\end{equation}
Combining this with Lemma \ref{lem::L_k}, we obtain
			\[
		\mu_k(X)\boldsymbol{x}^T\boldsymbol{x}=\boldsymbol{x}^TL_k\boldsymbol{x}=\boldsymbol{x}^TD_k\boldsymbol{x}+\boldsymbol{x}^TH_kH_k^T\boldsymbol{x}\geq\mu_k(X)\boldsymbol{x}^T\boldsymbol{x}+\boldsymbol{x}^TH_kH_k^T\boldsymbol{x}\geq \mu_k(X)\boldsymbol{x}^T\boldsymbol{x},\]
			implying that  $\boldsymbol{x}^TD_k\boldsymbol{x}=\mu_k(X)\boldsymbol{x}^T\boldsymbol{x}$ and $H_k^T\boldsymbol{x}=0$. 
			If $x_\sigma\neq 0$, then we must have $D_k(\sigma,\sigma)=\mu_k(X)$, and hence
			$\deg_X(\sigma)=0$ by \eqref{equ::diag1}. Therefore, we assert that $\deg_X(\sigma)=0$ for all $\sigma\in X(k)\setminus\{\sigma_0\}$ with  $\sigma\sim \sigma_0$. 
			
			Now consider  $\sigma,\eta\in X(k)$ with $|\sigma\cap\sigma_0|=|\eta\cap\sigma_0|=|\sigma\cap\eta|=k$. According to (F2),  $\deg_X(\sigma_0)=0$. Consequently,  $\sigma\cup \sigma_0\notin X(k+1)$ and $\eta\cup \sigma_0\notin X(k+1)$, that is, $\sigma\sim \sigma_0$ and $\eta\sim \sigma_0$. Then from (i) we obtain 
			\begin{equation}\label{equ::entry}
				\left\{\begin{aligned} x_{\sigma}&=L_k(\sigma_0,\sigma)=(\sigma_0:\sigma_0\cap\sigma)\cdot(\sigma:\sigma_0\cap\sigma)\neq 0,\\
					x_\eta&=L_k(\sigma_0,\eta)=(\sigma_0:\sigma_0\cap\eta)\cdot(\eta:\sigma_0\cap\eta)\neq 0,
				\end{aligned}\right.
			\end{equation}
			which implies that $\deg_X(\sigma)=\deg_X(\eta)=0$. Thus $\sigma\cup \eta\notin X(k+1)$, and it follows that  $\sigma\sim \eta$. Therefore,  $H_k^T\boldsymbol{x}$ has a component corresponding to $\{\sigma,\eta\}$, which is  equal to $x_{\sigma}\cdot(\sigma:\sigma\cap\eta)+x_{\eta}\cdot(\eta:\sigma\cap\eta)$. Since   $H_k^T\boldsymbol{x}=0$, we get
			\begin{equation*}
				x_{\sigma}\cdot(\sigma:\sigma\cap\eta)=- x_{\eta}\cdot(\eta:\sigma\cap\eta),
			\end{equation*}
			or equivalently,
			\begin{equation*}
				x_{\sigma}\cdot(\sigma:\sigma\cap\eta)\cdot x_{\eta}\cdot(\eta:\sigma\cap\eta)=-1.
			\end{equation*}
			Combining this with \eqref{equ::entry}, we obtain the result immediately.
		\end{proof}
		\begin{claim}\label{claim::3}	
			$M_{\sigma_0}(u)\cap M_{\sigma_0}(v)=\varnothing$ for  any two distinct  $u,v\in V\setminus \sigma_0$.  
		\end{claim}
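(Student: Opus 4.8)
The plan is to argue by contradiction, with Claim \ref{claim::2} as the central tool. Suppose there exist distinct $u,v\in V\setminus\sigma_0$ together with a common element $w\in M_{\sigma_0}(u)\cap M_{\sigma_0}(v)$. By the definition of $M_{\sigma_0}(\cdot)$ and $N_{\sigma_0}(\cdot)$, this is exactly the statement that the $(k-1)$-face $\tau:=\sigma_0\setminus\{w\}$ lies in both $N_{\sigma_0}(u)$ and $N_{\sigma_0}(v)$; equivalently, both $\sigma:=\tau\cup\{u\}$ and $\eta:=\tau\cup\{v\}$ belong to $X(k)$. These two $k$-simplices are the objects I would feed into Claim \ref{claim::2}.

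First I would record the pairwise intersections. Since $u,v\notin\sigma_0$ and $u\neq v$, one has $\sigma\cap\sigma_0=\eta\cap\sigma_0=\sigma\cap\eta=\tau$, and each of these intersections has cardinality $k$. Thus $\sigma$ and $\eta$ satisfy precisely the hypotheses $|\sigma\cap\sigma_0|=|\eta\cap\sigma_0|=|\sigma\cap\eta|=k$ required by Claim \ref{claim::2}, so the claim applies to this pair.

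Next I would apply Claim \ref{claim::2} and read off the six sign factors. Because all three intersections $\sigma_0\cap\sigma$, $\sigma_0\cap\eta$, and $\sigma\cap\eta$ coincide with $\tau$, the six factors in the product collapse into three squares, namely
$$(\sigma_0:\tau)^2\cdot(\sigma:\tau)^2\cdot(\eta:\tau)^2.$$
Since each factor $(\,\cdot:\tau\,)\in\{1,-1\}$, this product equals $+1$. But Claim \ref{claim::2} asserts it equals $-1$, a contradiction. Hence no common element $w$ can exist, which is exactly the assertion $M_{\sigma_0}(u)\cap M_{\sigma_0}(v)=\varnothing$.

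The argument is essentially sign bookkeeping, and I do not expect a substantive obstacle. The only points requiring care are the correct identification of the simplices $\sigma$ and $\eta$ obtained by swapping $w$ for $u$ and $v$ respectively, and the verification that all three pairwise intersections are genuinely equal to $\tau$, so that the six sign factors pair up into squares. Once this is in place, the contradiction emerges immediately from the parity mismatch between the forced value $+1$ and the value $-1$ supplied by Claim \ref{claim::2}.
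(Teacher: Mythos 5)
Your proof is correct and follows essentially the same route as the paper: construct $\sigma=\tau\cup\{u\}$ and $\eta=\tau\cup\{v\}$ from a putative common element $w$, verify the three intersections all equal $\tau=\sigma_0\setminus\{w\}$ and have size $k$, and derive a contradiction with Claim \ref{claim::2} because the six-fold sign product is $+1$. The only (cosmetic) difference is in the sign bookkeeping: the paper fixes a convenient total order $w\prec u\prec v\prec z$ so that each of the six factors is individually $1$, whereas you observe that the factors pair into the three squares $(\sigma_0:\tau)^2(\sigma:\tau)^2(\eta:\tau)^2$, which yields $+1$ under any order --- a marginally cleaner version of the same argument.
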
 
		\renewcommand\proofname{\it{Proof of Claim \ref{claim::3}}} 
		\begin{proof}
			Let $u,v$ be two distinct vertices in $V\setminus \sigma_0$.	 By contradiction, assume that $w\in M_{\sigma_0}(u)\cap M_{\sigma_0}(v)$. We fixed a total order $\prec$ on $V$ such that $w\prec u\prec v\prec z$ for all $z\in\sigma_0\setminus\{w\}$.
			Let $\sigma=\{u\}\cup(\sigma_0\setminus\{w\})$ and $\eta=\{v\}\cup(\sigma_0\setminus\{w\})$. We have $\sigma,\eta\in X(k)$ and  $|\sigma\cap\sigma_0|=|\eta\cap\sigma_0|=|\sigma\cap\eta|=k$. However, by simple observation,
			\begin{equation*}
	(\sigma_0:\sigma_0\cap\sigma)\cdot(\sigma:\sigma_0\cap\sigma)\cdot (\sigma_0:\sigma_0\cap\eta)\cdot(\eta:\sigma_0\cap\eta)\cdot(\sigma:\sigma\cap\eta)\cdot(\eta:\sigma\cap\eta)=1^6=1,
			\end{equation*}
			which is impossible by Claim \ref{claim::2}. Therefore,  $M_{\sigma_0}(u)\cap M_{\sigma_0}(v)=\varnothing$.
		\end{proof}

		Let $A=\sigma_0\setminus\left(\cup_{v\in V\setminus \sigma_0}M_{\sigma_0}(v)\right)$, and let $r:=|A|$. Note that $X[A]\cong \Delta_{r-1}$. According to Claim \ref{claim::3} and (F3), we have \[r=|A|=k+1-(n-k-1)d=(d+1)(k+1)-dn=\mu_k(X).\]
		Let \[Y=\mathrm{Join}\{X[\{v\}\cup M_{\sigma_0}(v)]:v\in V\setminus \sigma_0\}.\] 
		By Claim \ref{claim::1}, we see that $Y\cong (\Delta_d^{(d-1)})^{*(n-k-1)}$, and hence $\dim(Y)=d(n-k-1)-1=k-r$. Let $X'=Y*X[A]$. We have $X'\cong (\Delta_d^{(d-1)})^{*(n-k-1)}*\Delta_{r-1}$ and $\dim(X')=d(n-k-1)+r-1=k$. Furthermore, by Theorem \ref{thm::extremal_case},  $\mu_k(X')=r=\mu_k(X)$. Additionally, from Claim \ref{claim::1} and Claim \ref{claim::3} we immediately deduce that $X\subseteq X'$. Hence, $k\leq \dim(X)\leq \dim(X')=k$, and we have $ \dim(X)=\dim(X')=k$. 
		
		In what follows, we shall prove that $X=X'$. By contradiction, assume that $X$ is a proper subcomplex of $X'$. Note that,	for every simplicial complex $Z$ with $\dim(Z)=k$, we have $Z(k+1)=\varnothing$. By Lemma \ref{lem::Matrix_Rep}, the matrix representation $L_k(Z)$ of the reduced $k$-dimensional Laplacian of $Z$ is of the form
		\begin{equation*}
			L_k(Z)(\sigma,\tau)=\begin{cases} 
				k+1, & \mbox{if $\sigma=\tau$}, \\
				(\sigma:\sigma\cap\tau)\cdot(\tau:\sigma\cap\tau), &\mbox{if $|\sigma\cap\tau|=k$}, \\
				0, & \mbox{otherwise}.
			\end{cases}   
		\end{equation*}
		This implies that the value of  $L_k(Z)(\sigma,\tau)$ is determined by $\sigma$ and $\tau$ themselves. Consequently, we assert that  $L_k(X)$ is a principal submatrix of $L_k(X')$. Since $\lambda_{\min}(L_k(X))=\mu_k(X)=r=\mu_k(X')=\lambda_{\min}(L_k(X'))$, by  Lemma \ref{lem::principal}, we immediately deduce a contradiction if the following statement holds:
		
		\begin{enumerate}[(F4)]
			\item Every eigenvector of  $L_k(X')$ corresponding to $\mu_k(X')=r$ has no zero entries.
		\end{enumerate}
		
	Now we shall prove  (F4). Note that  $A \subseteq \sigma$ for every $\sigma \in X'(k)$. Since $\dim(X') = k$ and $|A|=r$, we have $Y(k-r)=\{\sigma \setminus A : \sigma \in X'(k)\}$. Moreover, we assert that $\deg_{Y}(\sigma \setminus A) = \deg_{X'}(\sigma) = 0$ for all $\sigma\in X'(k)$ because  $\dim(Y)=k-r$ and $\dim(X')=k$. Therefore, for any $\sigma,\tau\in X'(k)$,  $\sigma \setminus A \sim \tau \setminus A$ in $Y$ if and only if $\sigma \sim \tau$ in $X'$. We fix an order $\prec$ on $V(X')$ such that $u \prec v$ whenever $v \in A$ and $u \in V(X') \setminus A$. Then $(\sigma \setminus A : \tau \setminus A) = (\tau : \sigma)$ for any $\sigma, \tau \in X'(k)$. Consequently, from  Lemma \ref{lem::Matrix_Rep} we obtain
		\[ L_k(X') = L_{k-r}(Y) + rI. \]
This implies that $\mu_{k-r}(Y) = 0$, and the matrices $L_k(X')$ and $L_{k-r}(Y)$ share a common set of eigenvectors that  correspond to $\mu_{k}(X')$ and  $\mu_{k-r}(Y)$, respectively. Thus it suffices to prove that every eigenvector of  $L_{k-r}(Y)$ corresponding to $\mu_{k-r}(Y)$ has no zero entries. By contradiction, suppose that $\boldsymbol{y}=(y_\tau)_{\tau\in Y(k-r)}\in \mathbb{R}^{|Y(k-r)|}$ is an eigenvector of  $L_{k-r}(Y)$ corresponding to $\mu_{k-r}(Y)$  such that $y_{\tau_0}=0$ for some $\tau_0\in Y(k-r)$.  Let $Z=Y\setminus\{\tau_0\}$. We see that $Z$ is a $(k-r)$-dimensional proper subcomplex of $Y$. On the other hand, by Lemma \ref{lem::skeleton}, Lemma \ref{lem::SnSm} and Lemma \ref{lem::join}, we get    
		\[
		\|Y\|\cong\|(\Delta_d^{(d-1)})^{*(n-k-1)}\| \cong \|\Delta_d^{(d-1)}\|^{*(n-k-1)} 
		\cong (S^{d-1})^{*(n-k-1)} 
		\cong S^{d(n-k-1)-1} = S^{k-r},
		\]
		and hence $Y$ is a triangulation of the $(k-r)$-dimensional sphere $S^{k-r}$.  Since any proper subcomplex of a triangulation of an $(k-r)$-dimensional sphere has trivial $(k-r)$-dimensional cohomology (see, for example, \cite[p. 13]{Lew20}), we have $\widetilde{H}^{k-r}(Z;\mathbb{R})=0$, and by Theorem \ref{thm::Hodge},
		\begin{equation}\label{equ::subcomplex}
		\mu_{k-r}(Z)>0.
		\end{equation}
	Note that $\dim(Z) = \dim(Y)=k-r$. As above, we can show that $L_{k-r}(Z)$ is a proper principal submatrix of $L_{k-r}(Y)$ indexed by $Y(k-r)\setminus\{\tau_0\}$. Recall that $y_{\tau_0}=0$. By Lemma \ref{lem::principal}, we immediately deduce that \[\mu_{k-r}(Z)=\lambda_{\min}(L_{k-r}(Z))=\lambda_{\min}(L_{k-r}(Y))=\mu_{k-r}(Y)=0,\] contrary to  \eqref{equ::subcomplex}. This proves (F4).
		
		Therefore, we conclude that $X=X'\cong (\Delta_d^{(d-1)})^{*(n-k-1)}*\Delta_{r-1}=(\Delta_d^{(d-1)})^{*(n-k-1)}*\Delta_{(d+1)(k+1)-dn-1}$. In particular, $\dim(X)=\dim(X')=k$.
	\end{proof}

	\section*{Acknowledgements}
	
	The authors would like to thank Dr. Lu Lu for helpful discussions. X. Huang was supported by National Natural Science Foundation of China (Grant No. 11901540). H. Lin was supported by the National Natural Science Foundation of China (Nos. 12271162 and 12326372), and Natural Science Foundation of Shanghai (Nos. 22ZR1416300 and 23JC1401500)
	and The Program for Professor of Special Appointment (Eastern Scholar) at Shanghai Institutions of Higher Learning (No. TP2022031).	
	
	

\end{document}